\documentclass[11pt,twoside,reqno,centertags,draft]{amsart}
\usepackage{amsfonts}
\usepackage{color,enumitem,graphicx}
\usepackage[colorlinks=true,urlcolor=blue,
citecolor=red,linkcolor=blue,linktocpage,pdfpagelabels,
bookmarksnumbered,bookmarksopen]{hyperref}
\setcounter{page}{1}
  \usepackage{amsmath,amsthm,amsfonts,amssymb}

  \pagestyle{myheadings}
\thispagestyle{empty}
    \textwidth = 6 true in
    \textheight = 9.    true in

  \voffset= -20 true pt
  \oddsidemargin=0true in
  \evensidemargin=0true in

\begin{document}
\title{ Liouville theorem for the fractional Lane-Emden Equation in unbounded domain  }
\date{}
\maketitle

\vspace{ -1\baselineskip}

{\small
\begin{center}

\medskip

  {\sc   Huyuan Chen\,$^*$}
  \medskip

   Department of Mathematics, Jiangxi Normal University,\\
Nanchang, Jiangxi 330022, PR China\\
   Email: chenhuyuan@yeah.net\\[10pt]

\end{center}
}

\smallskip
{\small

\begin{quote}
{\bf Abstract.}
Our purpose of this paper is to study the nonexistence of nonnegative very weak solutions of
\begin{equation}\label{eq 0.1}
 \displaystyle  (-\Delta)^\alpha   u = u^p+\nu\quad
   {\rm in}\quad  \Omega,\qquad\   u=g\quad  {\rm in}\quad \mathbb{ R}^N\setminus \Omega,
 \end{equation}
where $\alpha\in(0,1)$, $p>0$,
$\Omega$ is a unbounded $C^2$ domain in $\mathbb{ R}^N$ with $N>2\alpha$, $g\in L^1(\mathbb{ R}^N\setminus \Omega,\frac{dx}{1+|x|^{N+2\alpha}})$ nonnegative and $\nu$ is a nonnegative Radon measure.
We obtain that\smallskip

 $(i)$ if $\Omega\supseteq \left(\mathbb{R}^N\setminus \overline{B_{r_0}(0)}\right)$ for some $r_0>0$ and $p<\frac{N}{N-2\alpha}$, then
problem (\ref{eq 0.1})  has no weak solutions.

$(ii)$ if $\Omega\supseteq \left\{x\in \mathbb{ R}^N:\, x\cdot a>r_0\right\}$ for some $r_0\ge 0$, $a\in \mathbb{ R}^N$ and $p<\frac{N+\alpha}{N-\alpha}$,
then problem (\ref{eq 0.1})  has no weak solutions. Here $\frac{N+\alpha}{N-\alpha}$ is sharp for the nonexistence in the half space. \smallskip

The above Liouville theorem  could be applied to obtain nonexistence of classical solution of
the  fractional Lane-Emden equations
$$
   (-\Delta)^\alpha   u = u^p\quad
  {\rm in}\quad  \Omega, \qquad  u\ge 0\quad {\rm in}\quad \mathbb{ R}^N\setminus \Omega,
 $$
where $\Omega=\mathbb{R}^N\setminus B_{r_0}(0)$ with $r_0>0$ or $\Omega=\mathbb{R}^{N-1}\times(0,+\infty)$.
\end{quote}

\medskip
\begin{quote}
{\bf R\'esum\'e.}
Le but de cet article est d'\'etudier \`a la non-existence de solution nonnegative tr\`es faible de
\begin{equation}\label{eq 0.01}
 \displaystyle  (-\Delta)^\alpha   u = u^p+\nu\quad
  {\rm dans}\quad  \Omega,\qquad
 u=g\quad  {\rm dans}\quad \mathbb{ R}^N\setminus \Omega,
\end{equation}
o\`u $\alpha\in(0,1)$, $p>0$,
$\Omega$ est un domaine de $C^2$, nonborn\'e de $\mathbb{ R}^N$ avec $N>2\alpha$, $g\in L^1(\mathbb{ R}^N\setminus \Omega,\frac{dx}{1+|x|^{N+2\alpha}})$ nonnegative et $\nu$ est une mesure de Radon nonnegative.
On obtient alors \smallskip

 $(i)$ si $\Omega\supseteq \left(\mathbb{R}^N\setminus \overline{B_{r_0}(0)}\right)$ pour certain $r_0>0$ et $p<\frac{N}{N-2\alpha}$, alors le
probl\`eme (\ref{eq 0.01})  n'a pas de solution faible.

$(ii)$ si $\Omega\supseteq \left\{x\in \mathbb{ R}^N:\, x\cdot a>r_0\right\}$ pour certain $r_0\ge 0$, $a\in \mathbb{ R}^N$ et $p<\frac{N+\alpha}{N-\alpha}$,
alors le probl\`eme (\ref{eq 0.1})  n'a pas de solution faible. Ici $\frac{N+\alpha}{N-\alpha}$ est optimal pour la non-existence de solution dans le demi-espace. \smallskip

Le th\'eor\`eme de Liouville pr\'ecedent peut etre appliqu\'e pour montrer \`a la non-existence de solution classique de l'\'equation fractionelle de  Lane-Emden
$$
 \displaystyle  (-\Delta)^\alpha   u = u^p\quad {\rm dans}\quad  \Omega,
\qquad  u\ge 0\quad {\rm dans}\quad \mathbb{ R}^N\setminus \Omega,
 $$
o\`u $\Omega=\mathbb{R}^N\setminus B_{r_0}(0)$ avec $r_0>0$ ou $\Omega=\mathbb{R}^{N-1}\times(0,+\infty)$.
\end{quote}

}

 \renewcommand{\thefootnote}{}
\footnote{*Corresponding author.}
 \footnote{MSC2010: 35R06, 35B53, 35D30.}
\footnote{Keywords: Fractional Lane-Emden equation, Liouville Theorem, Very weak solution.}

\newcommand{\N}{\mathbb{N}}
\newcommand{\R}{\mathbb{R}}
\newcommand{\Z}{\mathbb{Z}}

\newcommand{\cA}{{\mathcal A}}
\newcommand{\cB}{{\mathcal B}}
\newcommand{\cC}{{\mathcal C}}
\newcommand{\cD}{{\mathcal D}}
\newcommand{\cE}{{\mathcal E}}
\newcommand{\cF}{{\mathcal F}}
\newcommand{\cG}{{\mathcal G}}
\newcommand{\cH}{{\mathcal H}}
\newcommand{\cI}{{\mathcal I}}
\newcommand{\cJ}{{\mathcal J}}
\newcommand{\cK}{{\mathcal K}}
\newcommand{\cL}{{\mathcal L}}
\newcommand{\cM}{{\mathcal M}}
\newcommand{\cN}{{\mathcal N}}
\newcommand{\cO}{{\mathcal O}}
\newcommand{\cP}{{\mathcal P}}
\newcommand{\cQ}{{\mathcal Q}}
\newcommand{\cR}{{\mathcal R}}
\newcommand{\cS}{{\mathcal S}}
\newcommand{\cT}{{\mathcal T}}
\newcommand{\cU}{{\mathcal U}}
\newcommand{\cV}{{\mathcal V}}
\newcommand{\cW}{{\mathcal W}}
\newcommand{\cX}{{\mathcal X}}
\newcommand{\cY}{{\mathcal Y}}
\newcommand{\cZ}{{\mathcal Z}}

\newcommand{\abs}[1]{\lvert#1\rvert}
\newcommand{\xabs}[1]{\left\lvert#1\right\rvert}
\newcommand{\norm}[1]{\lVert#1\rVert}

\newcommand{\loc}{\mathrm{loc}}
\newcommand{\p}{\partial}
\newcommand{\h}{\hskip 5mm}
\newcommand{\ti}{\widetilde}
\newcommand{\D}{\Delta}
\newcommand{\e}{\epsilon}
\newcommand{\bs}{\backslash}
\newcommand{\ep}{\emptyset}
\newcommand{\su}{\subset}
\newcommand{\ds}{\displaystyle}
\newcommand{\ld}{\lambda}
\newcommand{\vp}{\varphi}
\newcommand{\wpp}{W_0^{1,\ p}(\Omega)}
\newcommand{\ino}{\int_\Omega}
\newcommand{\bo}{\overline{\Omega}}
\newcommand{\ccc}{\cC_0^1(\bo)}
\newcommand{\iii}{\opint_{D_1}D_i}

\numberwithin{equation}{section}

\vskip 0.2cm \arraycolsep1.5pt
\newtheorem{lemma}{Lemma}[section]
\newtheorem{theorem}{Theorem}[section]
\newtheorem{definition}{Definition}[section]
\newtheorem{proposition}{Proposition}[section]
\newtheorem{remark}{Remark}[section]
\newtheorem{corollary}{Corollary}[section]

\setcounter{equation}{0}
\section{Introduction}

Let   $\Omega$ be a  $C^2$  domain in $\mathbb{R}^N$ satisfying that\\[1.5mm]
$(i)$  $\Omega\supseteq \left(\mathbb{R}^N\setminus \overline{B_{r_0}(0)}\right)$ \ \ or\ \ $(ii)$   $\Omega\supseteq \left\{x\in \mathbb{ R}^N:\, x\cdot a>0\right\}$,\\[1.5mm]
where   $r_0>0$ and $a\in \mathbb{ R}^N\setminus\{0\}$.
Our  purpose of this paper is to study the nonexistence of nonnegative very weak solutions to the fractional Lane-Emden type equation
\begin{equation}\label{eq 1.1}
\arraycolsep=1pt
\begin{array}{lll}
 \displaystyle
 \displaystyle  (-\Delta)^\alpha   u = u^p+\nu\quad
  &{\rm in}\quad  \Omega,\\[2mm]
 \phantom{(-\Delta)^\alpha }
 u=g\quad &{\rm in}\quad \R^N\setminus \Omega,
 \end{array}
\end{equation}
where    $p>0$, $\nu$ is a nonnegative Radon measure in $\Omega$, $g$ is a nonnegative function in $L^1(\R^N\setminus \Omega,\frac{dx}{1+|x|^{N+2\alpha}})$
 and  $(-\Delta)^\alpha$ with $\alpha\in(0,1)$    is   the fractional Laplacian defined in the  principle value sense,
$$(-\Delta)^\alpha  u(x)=c_{N,\alpha}\lim_{\epsilon\to0^+} \int_{\R^N\setminus B_\epsilon(0) }\frac{ u(x)-
u(x+z)}{|z|^{N+2\alpha}} \, dz,$$
here $B_\epsilon(0)$ is the ball with radius $\epsilon$ centered at the origin and  $c_{N,\alpha}>0$ is the normalized constant, see \cite{NPV} for details. In the particular case that
$\Omega=\R^N$ or $\Omega=\R^N\setminus \{x_0\}$ for some point $x_0\in\R^N$, the subjection: $u=g$ in $\R^N\setminus \Omega$ in (\ref{eq 1.1}) may be omitted.

It is known that the Liouville theorem  plays a crucial role in  deriving a priori estimates for solutions in PDE analysis and the nonexistence of entire solution to the second order differential equations has been  studied for some decades. There is a large literature on the nonexistence of solutions for the problem
 \begin{equation}\label{eq 1.04}
-\Delta u= f(u)\ \ {\rm in} \ \ \Omega,\qquad u\mid_{\partial\Omega}=0,\qquad \lim_{x\in\Omega,\,|x|\to+\infty} u(x)=0.
 \end{equation}
Berestycki and Lions in \cite{BL}  obtained the nonexistence results of (\ref{eq 1.04}) when $\Omega=\R^N$, Esteban \cite{E} made use of a version of Maximum Principle to study
the nonexistence of solutions of (\ref{eq 1.04}), when $\Omega$ is a strip tpye domain. For general unbounded domain, the nonexistence result was derived by Esteban and Lions  in \cite{EL}.
The Liouville theorem has been extended to the fully nonlinear elliptic equations, see the references \cite{AS1,BCN,BHR1,CC1,CF,CL,QS,LR},
by developing the basic tools:  the Maximum Principle and Hadamard Estimates.

During the last years,  there has been a renewed and increasing interest in the study of linear and nonlinear integral operators, especially, the fractional Laplacian,  motivated by great interest in the model diverse physical phenomena, such as anomalous diffusion and quasi-geostrophic flows, turbulence and water waves, molecular dynamics, and relativistic quantum mechanics of stars, see \cite{BG,CV,TZ} and by important advances on the theory of nonlinear partial differential equations. The Liouville theorem of the nonlocal elliptic problems has been attracting the attentions,  Felmer and Quaas in \cite{FQ2} extended the Hadamard estiamte for the fractional Pucci's operator and obtained the Liouville theorem for the corresponding Lane-Emden equations.

As a typical nolocal operator, the fractional Laplacian has been studied deeply,  Z. Chen et al in \cite{CS,CT} derived the estimates for its Green's kernels by the stochastic method,
W. Chen et al in \cite{CFY,CLL,C} obtained the nonexistence of the entire solution for the fractional  elliptic equations,   M. Fall and T. Weth in \cite{FM,FW} obtained the nonexistence of positive  solutions  for a class of fractional semilinear elliptic equations in unbounded domains.

Recently, H. Chen et al in \cite{CQ,CV1} studied the fractional elliptic equation with Radon measures in bounded domain. In particular, the fractional Lane-emden type equation
\begin{equation}\label{eq 1.1-1}
\arraycolsep=1pt
\begin{array}{lll}
 \displaystyle
 \displaystyle  (-\Delta)^\alpha   u = u^p+k\delta_0\quad
  &{\rm in}\quad  \Omega,\\[2mm]
 \phantom{(-\Delta)^\alpha }
 u=0\quad &{\rm in}\quad \R^N\setminus \Omega
 \end{array}
\end{equation}
has  very weak solution when $p<\frac{N}{N-2\alpha}$ and $k>0$ small, and has no very weak solution when $p\ge \frac{N}{N-2\alpha}$,  where $\Omega$ is a bounded domain containing the origin. One may ask if (\ref{eq 1.1-1}) has  very weak solution when the domain $\Omega$ is unbounded. Our motivation in this article is to clarify the existence and nonexistence when
 it involves unbounded domain, such as the whole domain, exterior domain and half space.

Before stating our main results, we make precise the notion of very weak solution used in this article.  {\it A function  $u$ is said to be a  very weak solution of (\ref{eq 1.1}) if $u\in L^1(\R^N, \frac{dx}{1+|x|^{N+2\alpha}})$,\ \  $|u|^p\in L^1_{loc}(\R^N)$ and
\begin{equation}\label{de 1.1}
  \int_{\Omega}u (-\Delta)^\alpha \xi\, dx=\int_{\Omega}u^p  \xi\, dx+\int_{\Omega}\xi\, d\nu-\int_{\Omega}  \xi (-\Delta)^\alpha  \tilde g \, dx,\quad \forall \xi\in C^\infty_c(\Omega),
\end{equation}
where $\tilde g=0$ is the zero extension of $g$ in $\Omega$,  $C^\infty_c(\Omega)$ is the space of all the functions in $C^\infty(\R^N)$ with compact support in $\Omega$. }  Although we set the the test function $\xi$ has
compact support in $\R^N$, it follows by the nonlocal property of the fractional Laplacian that $(-\Delta)^\alpha \xi (x)$ may have the decaying rate $|x|^{-N-2\alpha}$ as $|x|\to+\infty$. This decaying at infinity requires  that $u,\,g\in L^1(\R^N, \frac{dx}{1+|x|^{N+2\alpha}})$.

Now we state our first main results.

\begin{theorem}\label{teo 1}
Assume that $N>2\alpha$, $\nu$ is a nonnegative Radon measure in $\Omega$, $g$ is a nonnegative function in $L^1(\R^N\setminus \Omega,\frac{dx}{1+|x|^{N+2\alpha}})$.

$(i)$ Assume that  $\Omega\supseteq (\mathbb{R}^N\setminus \overline{B_{r_0}(0)})$ for some $r_0>0$,
\begin{equation}\label{1.1}
 p\in\left(0,\, \frac{N}{ N-2\alpha}\right).
\end{equation}
Then  for any nonnegative measure $\nu\not=0$, problem (\ref{eq 1.1}) has no nonnegative very weak solution. \smallskip

$(ii)$ Assume that  $\Omega\supseteq \{x\in \mathbb{ R}^N:\, x\cdot a>r_0\}$ for some $r_0>0$, $a\in \mathbb{ R}^N\setminus\{0\}$,
\begin{equation}\label{1.2}
 p\in\left(0,\, \frac{N+\alpha}{ N- \alpha}\right).
\end{equation}
Then for any nonnegative measure $\nu\not=0$, problem (\ref{eq 1.1}) has no nonnegative very weak solution. \smallskip
\end{theorem}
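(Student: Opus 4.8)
### Proof strategy for Theorem 1.1

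\textbf{Overall approach.} The plan is to argue by contradiction, assuming a nonnegative very weak solution $u$ exists, and to derive a quantitative lower bound on $u$ from the measure term $\nu\neq 0$ that, when fed back into the nonlinear term $u^p$, forces a growth rate incompatible with the integrability constraint $u\in L^1(\R^N,\frac{dx}{1+|x|^{N+2\alpha}})$. The key observation is that since $(-\Delta)^\alpha u \ge \nu \ge 0$ and $\nu\not\equiv 0$, the solution dominates a fractional Poisson/Green potential of $\nu$. Working with suitable test functions $\xi$ in \eqref{de 1.1}, one sees $u$ must be at least comparable to the Green potential of a Dirac-type mass in $\Omega$; by the known sharp estimates for the Green function of $(-\Delta)^\alpha$ (as cited, e.g., \cite{CS,CT,CQ}), this potential behaves like $|x|^{2\alpha-N}$ away from the support of $\nu$ in the exterior-domain case, giving a first lower bound $u(x)\gtrsim |x|^{2\alpha-N}$ for $|x|$ large.

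\textbf{Bootstrap / iteration.} The heart of the argument is an iteration. Having $u(x)\gtrsim |x|^{-\beta_0}$ with $\beta_0=N-2\alpha$, we plug this into $(-\Delta)^\alpha u\ge u^p \gtrsim |x|^{-p\beta_0}$ and estimate the Green potential of $|x|^{-p\beta_0}$ over a large annulus or half-space, obtaining a new lower bound $u(x)\gtrsim |x|^{-\beta_1}$ with $\beta_1$ strictly smaller than $\beta_0$ precisely because $p<\frac{N}{N-2\alpha}$ in case (i) (resp. $p<\frac{N+\alpha}{N-\alpha}$ in case (ii)). Concretely, convolving $|z|^{-p\beta}$ against the Green kernel $\sim |x-z|^{2\alpha-N}$ over the region $\{|z|\sim R\}$ contributes $\sim R^{N-p\beta}\cdot R^{2\alpha-N} = R^{2\alpha-p\beta}$ at scale $|x|\sim R$; iterating the recursion $\beta_{k+1} = p\beta_k - 2\alpha$ (case (i)) one checks the exponents decrease and eventually $\beta_k \le 0$, i.e. $u$ is bounded below by a positive constant (or worse, a growing function) at infinity, contradicting the weighted $L^1$ bound since $\int_{\R^N}\frac{dx}{1+|x|^{N+2\alpha}}<\infty$ forces no such lower bound — more precisely one pushes to $u(x)\gtrsim |x|^{\epsilon}$ for some $\epsilon>0$. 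For the half-space case (ii) the relevant kernel is the Green function of $(-\Delta)^\alpha$ on $\{x\cdot a>r_0\}$, whose boundary decay $\mathrm{dist}(x,\partial\Omega)^\alpha$ shifts the scaling exponent, which is exactly why the sharp threshold becomes $\frac{N+\alpha}{N-\alpha}$ rather than $\frac{N}{N-2\alpha}$.

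\textbf{Technical backbone.} To make the potential estimates rigorous within the very weak formulation, I would test \eqref{de 1.1} against $\xi = \xi_R$, a smooth cutoff supported in $B_{2R}\cap\Omega$ (resp. in a large box inside the half-space) equal to $1$ on $B_R$, and control $\int u\,(-\Delta)^\alpha\xi_R$ using the decay $|(-\Delta)^\alpha\xi_R(x)|\lesssim R^{-2\alpha}\wedge R^N|x|^{-N-2\alpha}$ together with the standing lower bounds on $u$; alternatively, and more cleanly, use the approach of comparing $u$ with the very weak solution $w$ of the linearized problem $(-\Delta)^\alpha w = \nu$, $w=g$ outside $\Omega$, via Kato-type inequality / comparison (monotonicity of the very weak formulation in the source), so that $u\ge w\ge$ Green potential of $\nu$, and then iterate on the sequence of linear problems $(-\Delta)^\alpha w_{k+1} = w_k^p$. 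I expect the main obstacle to be twofold: first, justifying the comparison principle and the iteration rigorously in the very weak / measure setting on an \emph{unbounded} domain, where one must be careful that the Green potentials are finite and that the zero-extension $\tilde g$ and the tail of $\nu$ do not spoil the estimates; second, obtaining the sharp scaling exponent in case (ii), which requires the precise boundary behavior of the half-space fractional Green function (the factor $\mathrm{dist}(x,\partial\Omega)^\alpha$) rather than just interior bounds. The exterior case (i) is the model computation; case (ii) is the same iteration with the modified kernel, and the claimed sharpness of $\frac{N+\alpha}{N-\alpha}$ is consistent with the known explicit $s$-harmonic-type supersolutions $(x\cdot a)_+^{\alpha}$ in the half-space that one would use to show existence above the threshold.
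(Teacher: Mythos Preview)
Your proposal is correct and follows essentially the same route as the paper: argue by contradiction, use the comparison principle (via Kato's inequality and exhaustion by bounded domains) to get the initial lower bound $u(x)\gtrsim |x|^{2\alpha-N}$ (resp.\ $|x|^{\alpha-N}$ in the half-space case) from the Green potential of $\nu$, and then bootstrap with the recursion $\tau_{j+1}=2\alpha+p\tau_j$ (your $\beta_{k+1}=p\beta_k-2\alpha$) until the exponent becomes nonnegative, yielding the contradiction. The paper carries out exactly this iteration (Lemmas~2.5, 3.1, 3.2, 3.3), restricting the half-space estimates to a cone $\mathcal{C}_1$ where $x_N\sim|x|$ so that the boundary weight $x_N^\alpha$ in the Green kernel contributes the expected shift, and at the final step fixes $x$ and lets the outer radius $r\to\infty$ to obtain $u(x)=+\infty$ directly rather than pushing the decay exponent past $2\alpha$.
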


The nonexistence of very weak solution in Theorem \ref{teo 1} is derived by contradiction.  Let $u$ be a nonnegative very weak solution of problem (\ref{eq 1.1}),  the nonnegative source $\nu$ would provide $u$ an initial positive decay at infinity, then this decay will be reacted by the source nonlinearity $u^p$, until  that $u$  blows up  everywhere.   To our knowledge, our method is new and it could be applied  in the Laplacian case, since it requires only the basic tools: the comparison principles (or Kato's inequality) and  the estimates of the corresponding Green's kernel.    This method, of course, is  suitable in the classical sense.
 We say that  $u$ is a classical  solution of
  \begin{equation}\label{eq 3.0}
  \arraycolsep=1pt
\begin{array}{lll}
 \displaystyle   (-\Delta)^\alpha   u = u^p\quad
 & {\rm in}\quad  \Omega,\\[2mm]
 \phantom{  (-\Delta)^\alpha }
\displaystyle    u =g\quad
 & {\rm in}\quad  \R^N\setminus\Omega,
\end{array}
  \end{equation}
if $u$ is continuous in $\Omega$ and satisfies the first equation in (\ref{eq 3.0}) pointwise in  $\Omega$, where the function $g$ is an outside source.

When $\Omega$ is an exterior domain,
i.e. $\Omega=\R^N\setminus \overline{B_{r_0}(0)}$, we have the following nonexistence of results.
\begin{corollary}\label{cr 1}
Assume that $p\in(0,\, \frac{N}{ N-2\alpha})$   and $u$ is a nonnegative classical solution of  problem
\begin{equation}\label{eq 1.2}
  \arraycolsep=1pt
\begin{array}{lll}
 \displaystyle   (-\Delta)^\alpha   u =  u^p\quad
 & {\rm in}\quad  \R^N\setminus B_{r_0}(0),\\[2mm]
 \phantom{  (-\Delta)^\alpha }
\displaystyle    u \ge 0\quad
 & {\rm a.e.\ in}\quad    B_{r_0}(0).
\end{array}
\end{equation}
 Then $u\equiv0$ a.e. in $\R^N$.
\end{corollary}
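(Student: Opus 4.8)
\emph{Proof proposal for Corollary~\ref{cr 1}.}
The plan is to deduce the statement from Theorem~\ref{teo 1}$(i)$. The one obstruction is that in Theorem~\ref{teo 1} the source measure is required to be nonzero, whereas here $\nu=0$; the way around it is that if $u\not\equiv0$ then $u$ itself generates, through its own values in a ball lying outside the equation's domain, a nonnegative nonzero exterior datum, and such a datum converts, via the very weak formulation (\ref{de 1.1}), into a genuine nonnegative nonzero \emph{source}. Applying Theorem~\ref{teo 1}$(i)$ on a slightly smaller exterior domain then yields a contradiction.

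Set $\Omega_0:=\R^N\setminus B_{r_0}(0)$. I would first treat the case in which $u$ vanishes a.e.\ in $\Omega_0$: being continuous there, $u$ then vanishes identically in $\Omega_0$, and for every $x\in\Omega_0$ the pointwise equation becomes
\[
0=u^p(x)=(-\Delta)^\alpha u(x)=-\,c_{N,\alpha}\int_{B_{r_0}(0)}\frac{u(y)}{|x-y|^{N+2\alpha}}\,dy .
\]
Since $u\ge0$ a.e.\ in $B_{r_0}(0)$ and the kernel is strictly positive, this forces $u=0$ a.e.\ in $B_{r_0}(0)$, hence $u\equiv0$ a.e.\ in $\R^N$.

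It remains to exclude $u\not\equiv0$, i.e.\ the case in which $u$ does not vanish a.e.\ in $\Omega_0$. By continuity there exist $x_0\in\Omega_0$ and $\rho>0$ with $\overline{B_\rho(x_0)}\subset\Omega_0$, $\overline{B_\rho(x_0)}\cap\overline{B_{r_0}(0)}=\emptyset$ and $u\ge c>0$ on $B_\rho(x_0)$. Put $\Omega_1:=\R^N\setminus\big(\overline{B_{r_0}(0)}\cup\overline{B_\rho(x_0)}\big)$; this is a $C^2$ domain with $\Omega_1\supseteq\R^N\setminus\overline{B_R(0)}$ for $R$ large, so $\Omega_1$ falls under hypothesis $(i)$ of Theorem~\ref{teo 1}. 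Restricted to $\Omega_1$, the function $u$ still solves $(-\Delta)^\alpha u=u^p$ pointwise, with exterior datum $g_1:=u|_{\R^N\setminus\Omega_1}$, which is nonnegative and, since $g_1\ge c$ on $B_\rho(x_0)$, nonzero. The mere convergence of the integral defining $(-\Delta)^\alpha u$ at one point already gives $u\in L^1(\R^N,\tfrac{dx}{1+|x|^{N+2\alpha}})$, hence in particular $g_1\in L^1(\R^N\setminus\Omega_1,\tfrac{dx}{1+|x|^{N+2\alpha}})$; together with the continuity of $u$ on $\Omega_1$, this makes $u$ a nonnegative very weak solution of (\ref{eq 1.1}) on $\Omega_1$ with datum $g_1$ and $\nu=0$. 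Now any $\xi\in C^\infty_c(\Omega_1)$ has support compactly contained in $\Omega_1$, so $\tilde g_1$ vanishes in a neighbourhood of $\mathrm{supp}\,\xi$, and there
\[
-(-\Delta)^\alpha\tilde g_1(x)=c_{N,\alpha}\int_{\R^N\setminus\Omega_1}\frac{g_1(y)}{|x-y|^{N+2\alpha}}\,dy=:h(x),
\]
with $h(x)\ge c\,c_{N,\alpha}\int_{B_\rho(x_0)}|x-y|^{-N-2\alpha}\,dy>0$ on $\Omega_1$ and $h\in L^\infty_{\loc}(\Omega_1)\subset L^1_{\loc}(\Omega_1)$, since $\R^N\setminus\Omega_1$ is compact and stays at positive distance from each compact subset of $\Omega_1$. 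Substituting into (\ref{de 1.1}) exhibits $u$ as a nonnegative very weak solution of $(-\Delta)^\alpha u=u^p+h$ in $\Omega_1$ with zero exterior datum, where $h\,dx$ is a nonnegative nonzero Radon measure in $\Omega_1$. Since $p<\frac{N}{N-2\alpha}$, this contradicts Theorem~\ref{teo 1}$(i)$; hence $u\not\equiv0$ is impossible and $u\equiv0$ a.e.\ in $\R^N$.

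I expect the only point demanding care to be the bookkeeping at the interface: checking that the classical solution $u$, once restricted to $\Omega_1$, genuinely meets the integrability requirements in the definition of very weak solution (the weighted $L^1$ condition, which follows from the convergence of the defining integral of $(-\Delta)^\alpha u$, and the $L^1_{\loc}$ condition, which for the identity (\ref{de 1.1}) with test functions in $C^\infty_c(\Omega_1)$ only involves $\Omega_1$, where $u$ is continuous), and that the pointwise equation yields (\ref{de 1.1}) with the source $h\,dx$ — the latter being the standard self-adjoint integration by parts against $\xi\in C^\infty_c(\Omega_1)$. Everything else is the short kernel computation above and a single invocation of Theorem~\ref{teo 1}$(i)$.
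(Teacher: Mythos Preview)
Your proof is correct and takes a genuinely different route from the paper. The paper does \emph{not} invoke Theorem~\ref{teo 1}$(i)$ as a black box; instead it argues directly in the classical setting. First it notes that if $u(x_0)=0$ at even a single point of $\R^N\setminus B_{r_0}(0)$ then the strict positivity of the kernel forces $u\equiv 0$. Otherwise $u>0$ throughout, and the paper rescales $u_l(x)=u(x/l)$ for $l>1$ so that $(-\Delta)^\alpha u_l = l^{2\alpha}u_l^p = u_l^p + (l^{2\alpha}-1)u_l^p$, the extra term $(l^{2\alpha}-1)u_l^p>0$ playing the role of a strictly positive source. It then reruns the entire iterative bootstrap from the proof of Theorem~\ref{teo 1}$(i)$ (via Lemmas~\ref{lm 2.3} and~\ref{lm 3.1}), replacing the weak Comparison Principle by the strong one (Lemma~\ref{lm 2.0}), to conclude $u_l=+\infty$, a contradiction.

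Your approach --- excising a small ball on which $u\ge c>0$, absorbing the resulting nonzero exterior datum as a positive density $h\,dx$ via (\ref{2.0}), and then quoting Theorem~\ref{teo 1}$(i)$ --- is more economical and conceptually clean: it reuses the theorem rather than reproving it. The paper's route, on the other hand, stays entirely in the classical framework and so never has to verify the very weak hypotheses; in particular it sidesteps your one delicate point, namely that the paper's definition of very weak solution formally requires $|u|^p\in L^1_{\loc}(\R^N)$ while you only control this on $\Omega_1$. As you correctly observe, the proof of Theorem~\ref{teo 1}$(i)$ only tests against $\xi\in C^\infty_c(\Omega)$ and the comparison step (Lemma~\ref{cp}) only needs $u\ge 0$ outside, so the weaker condition suffices and your reduction is valid. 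Both arguments work; yours is shorter, the paper's is more self-contained.
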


In the half space case, i.e. $\Omega= \R^{N-1}\times (0,\infty)$, we have the following corollary.

\begin{corollary}\label{cr 2}
Assume that  $p\in(0,\, \frac{N+\alpha}{ N-\alpha})$  and $u$ is a nonnegative classical solution of  problem
\begin{equation}\label{eq 1.3}
  \arraycolsep=1pt
\begin{array}{lll}
 \displaystyle   (-\Delta)^\alpha   u =  u^p\quad
 & {\rm in}\quad  \R^N_+,\\[2mm]
 \phantom{  (-\Delta)^\alpha }
\displaystyle    u \ge 0\quad
 & {\rm a.e.\ in}\quad    \R^{N}\setminus \R^N_+,
\end{array}
\end{equation}
where $ \R^N_+=\R^{N-1}\times (0,\infty)$.
Then $u\equiv0$ a.e. in $\R^N$.
\end{corollary}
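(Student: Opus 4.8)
\medskip
\noindent\textbf{Proof strategy.} The plan is to argue by contradiction and reduce to Theorem~\ref{teo 1}$(ii)$. The only obstruction is that a classical solution of (\ref{eq 1.3}) is a very weak solution of (\ref{eq 1.1}) merely with $\nu=0$, while Theorem~\ref{teo 1}$(ii)$ needs $\nu\neq0$; the device is to restrict the equation to the raised half space $\Omega_1:=\{x\in\R^N:\ x_N>r_0\}$ with $r_0>0$ chosen so that $u$ is bounded below on a ball lying inside the slab $\{0<x_N\le r_0\}$, so that this part of $u$, now read as exterior datum for the problem posed on $\Omega_1$, is converted into a strictly positive, hence nonzero, source measure in $\Omega_1$.

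Write $\R^N_+=\{x_N>0\}$ and suppose $u$ is a nonnegative classical solution of (\ref{eq 1.3}) with $u\not\equiv0$ a.e.\ in $\R^N$. First I would dispose of the degenerate possibility $u\equiv0$ in $\R^N_+$: then $u$ vanishes in a neighbourhood of each $x\in\R^N_+$, so the pointwise equation yields
$$
0=u(x)^p=(-\Delta)^\alpha u(x)=-c_{N,\alpha}\int_{\R^N\setminus\R^N_+}\frac{u(y)}{|x-y|^{N+2\alpha}}\,dy\le0 ,
$$
forcing $u\equiv0$ a.e.\ in $\R^N\setminus\R^N_+$ too, a contradiction. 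Hence $u\not\equiv0$ in $\R^N_+$, so by continuity there are $x_0\in\R^N_+$, $\delta\in(0,(x_0)_N)$ and $c>0$ with $u\ge c$ on $B_\delta(x_0)\subset\R^N_+$. Set $r_0:=(x_0)_N+\delta>0$, so that $B_\delta(x_0)\subset\R^N\setminus\Omega_1$.

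The main step is to verify that $\hat u:=u\,\mathbf{1}_{\Omega_1}$ is a nonnegative very weak solution of (\ref{eq 1.1}) in $\Omega_1$ with exterior datum $g\equiv0$ and measure $\nu:=h\,dx$, where, with $\tilde g_1:=u\,\mathbf{1}_{\R^N\setminus\Omega_1}$,
$$
h(x):=-(-\Delta)^\alpha\tilde g_1(x)=c_{N,\alpha}\int_{\{y_N\le r_0\}}\frac{u(y)}{|x-y|^{N+2\alpha}}\,dy\ \ge\ c_{N,\alpha}\,c\int_{B_\delta(x_0)}\frac{dy}{|x-y|^{N+2\alpha}}\ >\ 0,\qquad x\in\Omega_1 .
$$
Since the kernel has no singularity for $x\in\Omega_1$, $h$ is continuous, strictly positive and locally bounded on $\Omega_1$, so $\nu$ is a nonzero nonnegative Radon measure in $\Omega_1$. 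The integrability requirements in (\ref{de 1.1}) are immediate ($\hat u\le u\in L^1(\R^N,\frac{dx}{1+|x|^{N+2\alpha}})$ and $|\hat u|^p\le u^p\in L^1_{\loc}(\R^N)$). For $\xi\in C_c^\infty(\Omega_1)$ I would split
$$
\int_{\Omega_1}\hat u\,(-\Delta)^\alpha\xi\,dx=\int_{\R^N}u\,(-\Delta)^\alpha\xi\,dx-\int_{\R^N}\tilde g_1\,(-\Delta)^\alpha\xi\,dx ,
$$
and use the self-adjointness of $(-\Delta)^\alpha$ on $C_c^\infty$ (valid since $u,\tilde g_1\in L^1(\R^N,\frac{dx}{1+|x|^{N+2\alpha}})$), together with $(-\Delta)^\alpha u=u^p$ pointwise on $\mathrm{supp}\,\xi\subset\Omega_1\subset\R^N_+$ and $(-\Delta)^\alpha\tilde g_1=-h$ on $\Omega_1$, to arrive at $\int_{\Omega_1}\hat u\,(-\Delta)^\alpha\xi\,dx=\int_{\Omega_1}\hat u^p\xi\,dx+\int_{\Omega_1}h\,\xi\,dx$; since $g\equiv0$ makes the $\tilde g$-term in (\ref{de 1.1}) vanish, this is precisely the very weak formulation for $\hat u$.

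Finally, $\Omega_1=\{x\cdot e_N>r_0\}$ with $r_0>0$, $p<\frac{N+\alpha}{N-\alpha}$ and $\nu\neq0$, so Theorem~\ref{teo 1}$(ii)$ forbids the existence of such $\hat u$, a contradiction. Therefore $u\equiv0$ a.e.\ in $\R^N_+$, whence the computation of the degenerate case forces $u\equiv0$ a.e.\ in $\R^N$. I expect the delicate points to be: the self-adjointness identity $\int u\,(-\Delta)^\alpha\xi\,dx=\int(-\Delta)^\alpha u\cdot\xi\,dx$ for a classical solution possessing only the weighted $L^1$ decay at infinity (and the analogous identity for $\tilde g_1$), and checking that $h\,dx$ is a locally finite nonzero Radon measure on $\Omega_1$; both are routine under the standing hypotheses but deserve to be stated. (Corollary~\ref{cr 1} is obtained the same way, restricting instead to $\R^N\setminus\overline{B_R(0)}$ with $R$ large enough that $u$ is bounded below on a ball in the annulus $\{r_0<|x|<R\}$, and invoking Theorem~\ref{teo 1}$(i)$.)
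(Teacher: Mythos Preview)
Your argument is correct and takes a genuinely different route from the paper's. The paper does not reduce to Theorem~\ref{teo 1}$(ii)$ as a black box; after observing via the strong maximum principle that either $u\equiv0$ a.e.\ or $u>0$ throughout $\R^N_+$, it rescales $u_l(x)=u(l^{-1}x)$ and then \emph{re-runs} the iterative decay-improvement scheme from the proof of Theorem~\ref{teo 1}$(ii)$ directly in the classical setting, with the strong Comparison Principle (Lemma~\ref{lm 2.0}) replacing the weak one. Your device---restricting to the raised half-space $\Omega_1=\{x_N>r_0\}$ and absorbing the positive values of $u$ on the slab $\{0<x_N\le r_0\}$ into a strictly positive absolutely continuous source $\nu=h\,dx$---is more modular: it invokes the theorem as stated rather than repeating its iteration, and would transfer immediately to any setting where a very-weak Liouville theorem is already in hand. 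The price you pay is the integration-by-parts identity $\int_{\R^N} u\,(-\Delta)^\alpha\xi\,dx=\int_{\R^N}\xi\,(-\Delta)^\alpha u\,dx$ for a classical solution possessing only the weighted $L^1$ decay, which you correctly flag as needing justification; the paper's purely pointwise-comparison approach sidesteps this technicality entirely.
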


Turning back to Theorem \ref{teo 1}, we  notice that in the case  $\Omega\supseteq \R^N\setminus \overline{B_{r_0}(0)}$,  problem (\ref{eq 1.1}) has no very weak solution when
$\nu=\delta_{\bar x}$ and $p\ge \frac{N}{N-2\alpha}$,  because of the singularity at the point $\bar x\in\Omega$, here  $\frac{N}{N-2\alpha}$ is called by Serrin type exponent in the exterior domain or the whole domain. In the half space case $\Omega=\R^N_+:=\R^{N-1}\times(0,+\infty)$,
the Serrin type exponent is $\frac{N+\alpha}{ N-\alpha}$, which is sharp for the nonexistence.  In fact, problem (\ref{eq 1.1}) has a very weak solution under the hypotheses
that $\nu$ is a Dirac mass and $\frac{N+\alpha}{ N-\alpha}\le p< \frac{N}{N-2\alpha}$.
Precisely, the existence result reads as follows.

\begin{theorem}\label{teo 2}
Assume that $k>0$, $\delta_{e_N}$ is the Dirac mass concentrated on the point $e_N=(0,\cdots,0,1)$ and
\begin{equation}\label{6.1}
p\in\left[\frac{N+\alpha}{N-\alpha}, \frac{N}{N-2\alpha}\right).
\end{equation}
Then there exists $k^*>0$ such that for
$k\in (0,k^*)$,
\begin{equation}\label{eq 6.1}
  \arraycolsep=1pt
\begin{array}{lll}
 \displaystyle   (-\Delta)^\alpha   u =  u^p+k\delta_{e_N}\quad
 & {\rm in}\quad  \R^{N}_+,\\[2mm]
 \phantom{  (-\Delta)^\alpha }
\displaystyle    u =0\quad
 & {\rm in}\quad    \R^N\setminus \R^{N}_+
\end{array}
\end{equation}
 admits a minimal positive solution $u_k$, which is a classical solution of
 \begin{equation}\label{eq 1.5}
  \arraycolsep=1pt
\begin{array}{lll}
 \displaystyle   (-\Delta)^\alpha   u =  u^p \quad
 & {\rm in}\quad  \R^{N}_+\setminus\{e_N\},\\[2mm]
 \phantom{  (-\Delta)^\alpha }
\displaystyle    u =0\quad
 & {\rm in}\quad    \R^N\setminus \R^{N}_+.
\end{array}
\end{equation}

\end{theorem}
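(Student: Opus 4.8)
The plan is to construct $u_k$ by a monotone iteration built from the Green operator of the restricted fractional Laplacian on $\R^N_+$ with zero exterior condition. Write $G^\alpha(x,y)$ for its Green kernel, $\mathbb{G}[f](x):=\int_{\R^N_+}G^\alpha(x,y)f(y)\,dy$ for the associated potential, and set $\Phi:=\mathbb{G}[\delta_{e_N}]=G^\alpha(\cdot,e_N)$. The sharp two-sided kernel estimate (as in the Green's function bounds of \cite{CS,CT})
$$G^\alpha(x,y)\ \asymp\ \frac{1}{|x-y|^{N-2\alpha}}\,\min\!\left\{1,\ \frac{x_N^\alpha\, y_N^\alpha}{|x-y|^{2\alpha}}\right\}$$
gives in particular $\Phi(x)\asymp|x-e_N|^{-(N-2\alpha)}$ near $e_N$ and $\Phi(x)\asymp x_N^\alpha(1+|x|)^{-N}$ for $|x-e_N|\ge\frac12$. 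The first behaviour, together with $p<\frac{N}{N-2\alpha}$, makes $\Phi^p$ integrable across the singularity $e_N$; the second, together with $p\ge\frac{N+\alpha}{N-\alpha}$, controls the behaviour at the boundary and at infinity.

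The key point is the pointwise bound
$$\mathbb{G}[\Phi^p](x)\ \le\ C_\star\,\Phi(x)\qquad\text{in }\R^N_+,\qquad C_\star=C_\star(N,\alpha,p),$$
which I would prove by splitting $\R^N_+$ into $B_{1/2}(e_N)$, where one uses the local integrability of $\Phi^p$ together with the boundary factor of $G^\alpha(x,\cdot)$, and its complement, where $\Phi^p(y)\asymp y_N^{\alpha p}(1+|y|)^{-Np}$ and, after inserting the kernel bound, the matter reduces to the convergence of $\int_{\R^N_+}y_N^{\alpha(p+1)}(1+|y|)^{-Np}\,dy$, i.e. to $p>\frac{N+\alpha}{N-\alpha}$. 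Granting this, choose $k^*>0$ so small that $\lambda:=2k$ satisfies $\lambda\ge k+C_\star\lambda^p$ for all $k\in(0,k^*)$ (possible since $p>1$), and iterate $u_0:=k\Phi$, $u_{n+1}:=k\Phi+\mathbb{G}[u_n^p]$. By induction $k\Phi\le u_n\le u_{n+1}\le 2k\Phi$, so $u_n\uparrow u_k$ with $k\Phi\le u_k\le 2k\Phi$, and monotone convergence gives $u_k=k\Phi+\mathbb{G}[u_k^p]$. The bounds on $u_k$ yield $u_k\in L^1(\R^N,\frac{dx}{1+|x|^{N+2\alpha}})$ and $u_k^p\in L^1_{\mathrm{loc}}(\R^N)$, and testing $u_k=k\Phi+\mathbb{G}[u_k^p]$ against $(-\Delta)^\alpha\xi$ with $\xi\in C^\infty_c(\R^N_+)$ — using Fubini, the symmetry of $G^\alpha$, and $\mathbb{G}[(-\Delta)^\alpha\xi]=\xi$ — recovers the weak formulation (\ref{de 1.1}) with $\nu=k\delta_{e_N}$ and $g=0$; hence $u_k$ solves (\ref{eq 6.1}) and $u_k\ge k\Phi>0$.

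For minimality, any nonnegative very weak solution $v$ of (\ref{eq 6.1}) satisfies $(-\Delta)^\alpha v\ge k\delta_{e_N}$ weakly, so Kato's inequality and the comparison principle for the Dirichlet problem give $v\ge k\Phi=u_0$, and then inductively $v=k\Phi+\mathbb{G}[v^p]\ge k\Phi+\mathbb{G}[u_n^p]=u_{n+1}$, so $v\ge u_k$; thus $u_k$ is the minimal positive solution. Finally, on each compact $K\subset\R^N_+\setminus\{e_N\}$ one has $u_k\le 2k\Phi\le C_K$, hence $u_k^p\in L^\infty(K)$, so interior regularity estimates for $(-\Delta)^\alpha$ upgrade $u_k$ to $C^{2\alpha+\varepsilon}_{\mathrm{loc}}(\R^N_+\setminus\{e_N\})$ and $(-\Delta)^\alpha u_k=u_k^p$ holds pointwise in $\R^N_+\setminus\{e_N\}$ while $u_k=0$ in $\R^N\setminus\R^N_+$; that is, $u_k$ is a classical solution of (\ref{eq 1.5}).

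The main obstacle is the borderline exponent $p=\frac{N+\alpha}{N-\alpha}$, for which the integral $\int_{\R^N_+}y_N^{\alpha(p+1)}(1+|y|)^{-Np}\,dy$ diverges logarithmically and the clean bound $\mathbb{G}[\Phi^p]\le C_\star\Phi$ just fails. Here one must work harder: either pass to the limit in the subcritical problems with exponents $p_j\downarrow\frac{N+\alpha}{N-\alpha}$ while keeping the construction uniform in $j$, or replace $\lambda\Phi$ by a supersolution carrying a precisely tuned logarithmic correction so that its compounding under the iteration stays controlled, or run the fixed-point scheme in a Marcinkiewicz-type space in which $\Phi$ and $\Phi^p$ occupy the endpoints of the relevant dual scale. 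This endpoint case is the delicate heart of the proof.
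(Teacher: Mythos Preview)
Your architecture is the paper's: the monotone iteration $u_0=k\Phi$, $u_{n+1}=k\Phi+\mathbb{G}[u_n^p]$ trapped under a barrier built from $\Phi=G^\alpha(\cdot,e_N)$, minimality by comparing any solution with the iterates, and interior regularity to upgrade to a classical solution away from $e_N$. The paper uses the slightly different barrier $w_t=tk^p\,\mathbb{G}[\Phi^p]+k\Phi$ instead of your $2k\Phi$, but since both reduce to the same key inequality $\mathbb{G}[\Phi^p]\le C_\star\Phi$ (this is the paper's Lemma~\ref{lm 6.1}), the difference is cosmetic. For $p$ strictly above $\tfrac{N+\alpha}{N-\alpha}$ your sketch of that inequality, carried out region by region as in the paper, goes through, and the rest of your argument is correct and matches the paper's.

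On the endpoint $p=\tfrac{N+\alpha}{N-\alpha}$: your instinct is right, and in fact sharper than the paper's treatment. The paper states Lemma~\ref{lm 6.1} for the closed range $p\in[\tfrac{N+\alpha}{N-\alpha},\tfrac{N}{N-2\alpha})$ and proceeds as if the bound $\mathbb{G}[\Phi^p]\le C_\star\Phi$ held there; but a direct lower bound shows it does not. Take $x=Re_N$ with $R$ large and restrict to $y$ in the cone $\{y_N>|y|/2\}$ with $2<|y|<R/2$: there $G^\alpha(x,y)\asymp s^\alpha R^{\alpha-N}$ and $\Phi^p(y)\asymp s^{(\alpha-N)p}$ for $s=|y|$, giving a contribution
\[
\mathbb{G}[\Phi^p](Re_N)\ \gtrsim\ R^{\alpha-N}\int_2^{R/2}s^{\,N+\alpha-1-(N-\alpha)p}\,ds,
\]
and at $p=\tfrac{N+\alpha}{N-\alpha}$ the exponent is $-1$, so the right side is $cR^{\alpha-N}\log R$, while $\Phi(Re_N)\asymp R^{\alpha-N}$. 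Thus the ratio $\mathbb{G}[\Phi^p]/\Phi$ is unbounded at the endpoint, and the paper's Case~3 argument in Lemma~\ref{lm 6.1} (which drops the $\min$ factor in $G^\alpha$ and then claims the scaled integral is uniformly bounded) does not survive there. Your divergent integral $\int y_N^{\alpha(p+1)}(1+|y|)^{-Np}dy$ is a slightly different bookkeeping of the same phenomenon. So at the endpoint neither the paper's proof nor yours is complete; you flag the gap explicitly, while the paper glosses over it. Of your proposed fixes, the logarithmically corrected barrier is the most direct line to pursue.
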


In  contrast with the existence of positive solutions to (\ref{eq 1.5}), Chen, Fang and Yang in \cite{CFY} obtained that the problem
\begin{equation}\label{eq 1.4}
  \arraycolsep=1pt
\begin{array}{lll}
 \displaystyle   (-\Delta)^\alpha   u =  u^p\quad
 & {\rm in}\quad  \R^N_+,\\[2mm]
 \phantom{  (-\Delta)^\alpha }
\displaystyle    u = 0\quad
 & {\rm  \ in}\quad   \R^N\setminus \R^N_+
\end{array}
\end{equation}
has only zero nonnegative solution  under the hypotheses that
$$p>\frac{N}{N-2\alpha}\quad{\rm and}\quad u\in L^{\frac{N(p-1)}{2\alpha}}(\R^N_+).$$
When $1<p<\frac{N-1+2\alpha}{N-1-2\alpha}$, the nonexistence of positive bounded classical solution to (\ref{eq 1.4}) has been obtained independently in  \cite{FM,QX}.  These Liouville type theorems are derived by employing the method of moving planes. However, when it involves nontrivial nonnegative outside source, the method of moving planes is no longer valid and the critical exponent for the nonexistence reduces to
 $\frac{N+\alpha}{N-\alpha}$, see Corollary \ref{cr 2},  and Theorem \ref{teo 2} provides an example showing the existence when  $p\ge \frac{N+\alpha}{N-\alpha}$.
 In fact, let  $w(x)=u_k(x+2e_N)$, where $u_k$ is the very weak solution of (\ref{eq 6.1}),  then $w$ is a classical solution of (\ref{eq 1.3}) with nontrivial nonnegative outside source.

The paper is organized as follows. In Section 2, we  show basic properties of the solutions  to nonhomogeneous problem with nonzero outside source,   the Integration by
Parts formula,  Comparison Principle.     Section  3 is devoted
to prove the nonexistence   of nonnegative solutions to (\ref{eq 1.1}) and to prove the nonexistence in the classical setting.
 Finally, we prove the existence  of very weak solutions
 of (\ref{eq 1.2}).

\section{Preliminary}

Given a $C^2$ domain $\mathcal{O}$,  denote $ d_{\mathcal{O}}(x)={\rm dist}(x,\partial \mathcal{O})$, denote by $G_{\alpha,\mathcal{O}}$ the Green's function in $\mathcal{O}\times \mathcal{O}$ and by $\mathbb{G}_{\alpha,\mathcal{O}}[\nu]$   the very weak solution of
$$
\arraycolsep=1pt
\begin{array}{lll}
(-\Delta)^\alpha &u=\nu\quad &  {\rm in}\quad\mathcal{O},\\[2mm]
 &u=0\quad & {\rm in}\quad \R^N\setminus\mathcal{O},
\end{array}
$$
where $\nu$ is a Radon measure in $\mathcal{O}$.
 In fact, for almost every $x\in\mathcal{O}$,
 $$\mathbb{G}_{\alpha,\mathcal{O}}[\nu](x)=\int_{\mathcal{O}}G_{\alpha,\mathcal{O}}(x,y)d\nu(y).$$
In what follows, we always denote by $c_i$  the positive constant with $i\in\N$.

We first introduce the strong Comparison Principle.

\begin{lemma}\label{lm 2.0}
Assume that $\mathcal{D}$ is a $C^2$ domain,
functions $f_1,\,f_2\in C(\mathcal{D})$  satisfy
 $f_2\ge f_1$ in $\mathcal{D}$ and $g_1,\,g_2\in C(\mathbb{R}^N\setminus \mathcal{D})$  satisfy
 $g_2\ge g_1$.

Let $u_i$ be the classical solutions of
$$
\arraycolsep=1pt
\begin{array}{lll}
(-\Delta)^\alpha &u=f_i\quad & {\rm in}\quad\mathcal{D},\\[2mm]
 &u=g_i\quad & {\rm in}\quad \R^N\setminus\mathcal{D}
\end{array}
$$
with $i=1,2$, respectively.
If
$$\liminf_{x\to \partial \mathcal{D}}u_2(x)\ge\limsup_{x\to \partial \mathcal{D}}u_1(x) $$
and for unbounded domain,
$$\liminf_{|x|\to\infty}u_2(x)\ge\limsup_{|x|\to \infty }u_1(x), $$
then
$$u_2\ge u_1\quad {\rm in}\ \ \mathcal{D}.$$

\end{lemma}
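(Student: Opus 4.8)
\emph{Proof proposal.} The plan is to run a fractional strong maximum principle on the difference. Set $w=u_2-u_1$. By linearity of $(-\Delta)^\alpha$, $w$ is a classical solution of $(-\Delta)^\alpha w=f_2-f_1\ge 0$ in $\mathcal{D}$, while $w=g_2-g_1\ge 0$ in $\R^N\setminus\mathcal{D}$. It suffices to show $w\ge 0$ in $\mathcal{D}$, so I argue by contradiction and suppose $m:=\inf_{\mathcal{D}}w<0$.

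First I would use the hypotheses on the boundary behaviour to confine a minimizing point to a compact subset of $\mathcal{D}$. Since $\liminf_{x\to\partial\mathcal{D}}w(x)\ge 0$ and, in the unbounded case, $\liminf_{|x|\to\infty}w(x)\ge 0$, there exist $\rho>0$ and (if $\mathcal{D}$ is unbounded) $R>0$ such that $w\ge m/2$ on $\mathcal{D}\cap\big(\{d_{\mathcal{D}}<\rho\}\cup(\R^N\setminus B_R(0))\big)$. Hence $\inf_K w=m$ on the compact set $K:=\overline{\{x\in\mathcal{D}:\,d_{\mathcal{D}}(x)\ge\rho\}}\cap\overline{B_R(0)}$, which is contained in $\mathcal{D}$ because $d_{\mathcal{D}}$ is continuous and $d_{\mathcal{D}}\ge\rho>0$ on $K$. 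By continuity of $w$ in $\mathcal{D}$, the value $m$ is attained at some interior point $x_0\in K\subset\mathcal{D}$. Moreover, since $w\ge 0>m$ on $\R^N\setminus\mathcal{D}$, the point $x_0$ realizes the global minimum of $w$ over all of $\R^N$, that is, $w(x_0)=m\le w(y)$ for every $y\in\R^N$.

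Next I would exploit that $x_0$ is an interior point at which $(-\Delta)^\alpha w$ is evaluated pointwise as a principal value. For every $z\in\R^N$ the integrand $w(x_0)-w(x_0+z)\le 0$, so
\begin{equation*}
0\le f_2(x_0)-f_1(x_0)=(-\Delta)^\alpha w(x_0)=c_{N,\alpha}\lim_{\epsilon\to0^+}\int_{\R^N\setminus B_\epsilon(0)}\frac{w(x_0)-w(x_0+z)}{|z|^{N+2\alpha}}\,dz\le 0.
\end{equation*}
Therefore $(-\Delta)^\alpha w(x_0)=0$, and a nonpositive integrand with vanishing integral must vanish for a.e. $z$, forcing $w\equiv m$ a.e. in $\R^N$. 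This contradicts $w=g_2-g_1\ge 0$ on $\R^N\setminus\mathcal{D}$, a set of positive Lebesgue measure since $\mathcal{D}$ is a $C^2$ domain with $\mathcal{D}\ne\R^N$; in the borderline case $\mathcal{D}=\R^N$ one instead contradicts the assumed limit at infinity. Hence $w\ge 0$, i.e. $u_2\ge u_1$ in $\mathcal{D}$.

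The main obstacle I anticipate is the bookkeeping in the second paragraph: making precise, in the unbounded setting, that the infimum of $w$ is actually attained in $\mathcal{D}$ (rather than merely approached along a sequence escaping to $\partial\mathcal{D}$ or to infinity) and that it coincides with the global infimum over $\R^N$. This is exactly where both boundary hypotheses enter. A secondary technical point is justifying that the principal-value integral defining $(-\Delta)^\alpha w(x_0)$ converges and equals $f_2(x_0)-f_1(x_0)$; this follows from the local regularity of the classical solutions near $x_0$ together with the integrability of $u_1,u_2$ against $\frac{dx}{1+|x|^{N+2\alpha}}$ that is built into the notion of solution used here.
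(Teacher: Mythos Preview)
Your argument is correct and follows essentially the same route as the paper: set $w=u_2-u_1$, assume $\inf_{\mathcal D}w<0$, locate an interior global minimizer $x_0$, and contrast the sign of $(-\Delta)^\alpha w(x_0)$ from the principal-value integral with $f_2(x_0)-f_1(x_0)\ge 0$. In fact you are more careful than the paper about why the infimum is actually attained (using the boundary and infinity hypotheses) and about the degenerate case $(-\Delta)^\alpha w(x_0)=0$; the paper simply asserts the strict inequality $(-\Delta)^\alpha(u_2-u_1)(x_0)<0$ directly.
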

\begin{proof}
If $\inf_{x\in \mathcal{D}}(u_2-u_1)(x)<0$, then  there exists a point $x_0\in \mathcal{D}$ such that
$(u_2 -u_1)(x_0)=\inf_{x\in \mathcal{D}}(u_2-u_1)(x)={\rm essinf}_{x\in \R^N}(u_2-u_1)(x),$
which implies that
$$(-\Delta)^\alpha (u_2-u_1)(x_0)={\rm P.V.} \int_{\R^N}\frac{(u_2-u_1)(x_0)-(u_2-u_1)(y)}{|x_0-y|^{N+2\alpha}}\,dy<0.$$
However,
$$(-\Delta)^\alpha (u_2-u_1)(x_0)=f_2(x_0)-f_1(x_0)\ge 0,$$
which is impossible.
\end{proof}

Next we introduce the weak Comparison Principles, which could be derived by the Kato's inequality in the fractional setting.
\begin{proposition}\label{pr 2.1}\cite[Proposition 2.4]{CV1}
Assume that $\mathcal{O}$ is a $C^2$ bounded domain  and $f\in L^1(\mathcal{O}, d_\mathcal{O}^\alpha dx)$, where $d_\mathcal{O}(x)={\rm dist}(x,\partial\mathcal{O})$. Then
there exists a  unique weak
solution $u$ to the problem
\begin{equation}\label{homo}
 \arraycolsep=1pt
\begin{array}{lll}
 (-\Delta)^\alpha &u=f\quad & {\rm in}\quad\mathcal{O},\\[2mm]
&u=0\quad & {\rm in}\quad \R^N\setminus\mathcal{O}.
\end{array}
\end{equation}
Moreover, for any $\xi\in C^{1.1}(\mathcal{O})\cap C^\alpha_0(\mathcal{O})$, $\xi\ge0$, we have that
 \begin{equation}\label{sign}
\int_{\mathcal{O}} |u|(-\Delta)^\alpha \xi dx\le \int_{\mathcal{O}}  \xi\
{{\rm sign}}(u)f \ dx
\end{equation}
and
 \begin{equation}\label{sign+}
\int_{\mathcal{O}} u_+(-\Delta)^\alpha \xi dx\le \int_{\mathcal{O}}  \xi \
{{\rm sign}}_+(u)f \ dx.
\end{equation}

\end{proposition}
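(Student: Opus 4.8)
The plan is to produce the weak solution as an $L^{1}$-limit of solutions with bounded right-hand side, for which the classical Green-representation theory applies, and then to obtain the two Kato inequalities \eqref{sign}--\eqref{sign+} by regularising the convex functions $t\mapsto|t|$ and $t\mapsto t_{+}$ and passing to the limit in a carefully chosen order.

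First I would record the linear estimate on which everything rests. For $f\in L^{\infty}(\mathcal{O})$ the function $u=\mathbb{G}_{\alpha,\mathcal{O}}[f]$ is the classical solution of \eqref{homo}, lies in $C^{\alpha}_{0}(\mathcal{O})$ and is smooth inside $\mathcal{O}$; the sharp two-sided bound on the Green kernel of a $C^{2}$ domain, integrated in $x$, gives $\int_{\mathcal{O}}G_{\alpha,\mathcal{O}}(x,y)\,dx\le c_{1}\,d_{\mathcal{O}}(y)^{\alpha}$ uniformly in $y$, whence by Fubini $\|\mathbb{G}_{\alpha,\mathcal{O}}[f]\|_{L^{1}(\mathcal{O})}\le c_{1}\int_{\mathcal{O}}|f|\,d_{\mathcal{O}}^{\alpha}\,dx$ --- the bound that singles out $L^{1}(\mathcal{O},d_{\mathcal{O}}^{\alpha}dx)$ as the data class. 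Given such an $f$, I would truncate, $f_{n}=\max\{-n,\min\{n,f\}\}$, so $f_{n}\to f$ in $L^{1}(\mathcal{O},d_{\mathcal{O}}^{\alpha}dx)$, put $u_{n}=\mathbb{G}_{\alpha,\mathcal{O}}[f_{n}]$, and use linearity and the estimate to get $u_{n}\to u$ in $L^{1}(\mathcal{O})$; extend $u$ by $0$. Passing to the limit in $\int_{\mathcal{O}}u_{n}(-\Delta)^{\alpha}\xi\,dx=\int_{\mathcal{O}}f_{n}\xi\,dx$ is then immediate --- the left-hand side since $(-\Delta)^{\alpha}\xi\in L^{\infty}$, the right-hand side since $|\xi|\le c_{2}\,d_{\mathcal{O}}^{\alpha}$ for $\xi\in C^{\alpha}_{0}(\mathcal{O})$ --- so $u$ is a weak solution. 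Uniqueness is the usual duality: if $u$ is weak with zero data then $\int_{\mathcal{O}}u(-\Delta)^{\alpha}\xi\,dx=0$ for all admissible test functions, and choosing $\xi=\mathbb{G}_{\alpha,\mathcal{O}}[\eta]$ for arbitrary $\eta\in C^{\infty}_{c}(\mathcal{O})$ forces $\int_{\mathcal{O}}u\eta\,dx=0$, hence $u=0$ a.e.

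For the Kato inequalities I would work with the smooth approximants $u_{n}$ and the Jensen-type pointwise inequality for $(-\Delta)^{\alpha}$: if $\phi\in C^{1}(\mathbb{R})$ is convex then $\phi(u_{n}(x))-\phi(u_{n}(y))\le\phi'(u_{n}(x))\,(u_{n}(x)-u_{n}(y))$ for every $y$, an inequality that survives the principal-value limit and yields $(-\Delta)^{\alpha}(\phi\circ u_{n})\le\phi'(u_{n})\,(-\Delta)^{\alpha}u_{n}=\phi'(u_{n})\,f_{n}$ pointwise in $\mathcal{O}$. I would apply this with $\phi=\phi_{\varepsilon}$, $\phi_{\varepsilon}(t)=\sqrt{t^{2}+\varepsilon^{2}}-\varepsilon$ (so $\phi_{\varepsilon}(0)=0$, $0\le\phi_{\varepsilon}\le|t|$, $|\phi_{\varepsilon}'|\le1$, $\phi_{\varepsilon}\to|t|$ and $\phi_{\varepsilon}'\to{\rm sign}$ as $\varepsilon\to0$); then $\phi_{\varepsilon}(u_{n})\in C^{\alpha}(\mathbb{R}^{N})$ has support in $\overline{\mathcal{O}}$, so multiplying by $\xi\ge0$ and using self-adjointness gives $\int_{\mathcal{O}}\phi_{\varepsilon}(u_{n})(-\Delta)^{\alpha}\xi\,dx\le\int_{\mathcal{O}}\xi\,\phi_{\varepsilon}'(u_{n})\,f_{n}\,dx$. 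Letting $n\to\infty$ with $\varepsilon$ fixed ($\phi_{\varepsilon}$ Lipschitz so $\phi_{\varepsilon}(u_{n})\to\phi_{\varepsilon}(u)$ in $L^{1}$; $\phi_{\varepsilon}'$ continuous and bounded, $f_{n}\to f$ in $L^{1}(d_{\mathcal{O}}^{\alpha}dx)$, generalized dominated convergence on the right), then $\varepsilon\to0$ (with dominations $|u|$ and $|f|$), produces \eqref{sign}. Inequality \eqref{sign+} is obtained the same way with a convex $C^{1}$ approximation $\psi_{\varepsilon}$ of $t_{+}$ satisfying $\psi_{\varepsilon}(0)=0$, $0\le\psi_{\varepsilon}'\le1$ and $\psi_{\varepsilon}'\to{\rm sign}_{+}$.

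The hard part will be the two limit passages in the last step, specifically keeping the right-hand side sharp: since ${\rm sign}$ is discontinuous at $0$ one cannot pass to the limit directly in $\int\xi\,{\rm sign}(u_{n})f_{n}$, and a crude estimate would only give $\int|u|(-\Delta)^{\alpha}\xi\le\int\xi|f|$, losing the sign. Interposing the smooth convex regularisations and taking limits in the order ``$n\to\infty$ first, then $\varepsilon\to0$'' is exactly what preserves the sharp form, because $\phi_{\varepsilon}'$ (resp.\ $\psi_{\varepsilon}'$) is continuous and vanishes at $0$. The remaining ingredients --- the Green-kernel bound behind the $L^{1}$ estimate, and the Jensen-type pointwise inequality for $(-\Delta)^{\alpha}$ --- are by now standard; see \cite{CV1}.
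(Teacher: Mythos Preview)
The paper does not prove this proposition; it is quoted from \cite[Proposition~2.4]{CV1} and used as a black box. Your outline is precisely the standard route to the fractional Kato inequality followed in \cite{CV1}: approximate $f$ by bounded data, use the Green-kernel bound $\int_{\mathcal{O}}G_{\alpha,\mathcal{O}}(\cdot,y)\,dx\le c\,d_{\mathcal{O}}(y)^{\alpha}$ to obtain $L^{1}$-compactness of the potentials, invoke the pointwise convexity inequality $(-\Delta)^{\alpha}(\phi\circ v)\le\phi'(v)\,(-\Delta)^{\alpha}v$ for smooth convex $\phi$, and pass to the limit in the order $n\to\infty$ then $\varepsilon\to0$. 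Your identification of the delicate point --- that one must regularise $|\cdot|$ and $(\cdot)_{+}$ \emph{before} sending $n\to\infty$, since ${\rm sign}(u_{n})$ need not converge to ${\rm sign}(u)$ --- is exactly right, and your handling of the double limit (Lipschitz continuity of $\phi_{\varepsilon}$ on the left, generalised dominated convergence with majorant $c\,d_{\mathcal{O}}^{\alpha}|f|$ on the right) is sound.

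One small point to tighten: you write that for $f_{n}\in L^{\infty}$ the potential $u_{n}=\mathbb{G}_{\alpha,\mathcal{O}}[f_{n}]$ is ``smooth inside $\mathcal{O}$''. Bounded data only yields $u_{n}\in C^{s}_{\rm loc}(\mathcal{O})$ for $s<2\alpha$ (or $C^{2\alpha}$ at best), which is borderline for the pointwise definition of $(-\Delta)^{\alpha}u_{n}$ and hence for the pointwise Jensen inequality you use. The fix is routine --- take $f_{n}$ smooth (mollify after truncating), so that $u_{n}\in C^{\infty}_{\rm loc}(\mathcal{O})\cap C^{\alpha}_{0}(\mathcal{O})$ and the pointwise computation is fully justified --- and the rest of your argument goes through unchanged.
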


 We say that $u_g$ is a very weak solution of
 \begin{equation}\label{eq 2.5}
\arraycolsep=1pt
\begin{array}{lll}
 \displaystyle
 \displaystyle  (-\Delta)^\alpha   u = \nu\quad
  &{\rm in}\quad  \mathcal{O},\\[2mm]
 \phantom{(-\Delta)^\alpha }
 u=g\quad &{\rm in}\quad \R^N\setminus \mathcal{O},
 \end{array}
\end{equation}
if $u\in L^1(\mathcal{O}, d_{\mathcal{O}}(x)^\alpha dx)$ and
\begin{equation}\label{e 2.1}
 \int_\mathcal{O} u_g (-\Delta)^\alpha \xi dx=\int_\mathcal{O} \xi d\nu -\int_\mathcal{O}  \xi (-\Delta)^\alpha  \tilde g  dx,\qquad \forall \xi\in C^\infty_c(\mathcal{O}),
\end{equation}
where $\nu$ is a bounded Radon measure in $\mathcal{O}$, $g\in L^1(\mathcal{O}^c,\frac{dx}{1+|x|^{N+2\alpha}})$,  $\tilde g$ is the zero extension of $g$ in $\mathcal{O}$. When $g=0$ in $\R^N\setminus \mathcal{O}$, The authors in\cite{CV1} showed that problem (\ref{eq 2.5})
admits a unique very weak solution $u_0$. When it involves the nonzero outside source $g$, the first difficulty is  the Integration by Parts formula.
 From the definition of $(-\Delta)^\alpha$, we have that
\begin{equation}\label{2.0}
 (-\Delta)^\alpha  \tilde g(x)=c_{N,\alpha}\int_{\R^N\setminus \mathcal{O}}\frac{g(y)}{|x-y|^{N+2\alpha}}\,dy,\qquad \forall x\in\mathcal{O}.
\end{equation}

\begin{lemma}\label{cpg}
Assume that $\mathcal{O}$ is a bounded $C^2$ domain,  $\nu$ is a bounded Radon measure, $g\in L^1(\mathcal{O}^c,\frac{dx}{1+|x|^{N-2\alpha}})$
and $u_0$ is a very weak solution of (\ref{eq 2.5}) with $g=0$ in $\R^N\setminus \mathcal{O}$.

Then   (\ref{eq 2.5}) admits  a unique very weak solution $u_g$ satisfying that
$${\rm if}\ \ g\ge 0,\ \  {\rm then}\ \ u_g\ge u_0$$
and
$${\rm if}\ \ g\le 0,\ \  {\rm then}\ \ u_g\le u_0.$$
\end{lemma}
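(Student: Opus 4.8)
The plan is to reduce the problem with a nonzero exterior datum $g$ to the already–settled case $g=0$ by absorbing $g$ into a source term. Since $\tilde g$ vanishes in $\mathcal{O}$, the singular–integral definition of $(-\Delta)^\alpha$ gives, for $x\in\mathcal{O}$,
$$h(x):=(-\Delta)^\alpha\tilde g(x)=-c_{N,\alpha}\int_{\R^N\setminus\mathcal{O}}\frac{g(y)}{|x-y|^{N+2\alpha}}\,dy$$
(cf. (\ref{2.0})), so that $h$ keeps a constant sign in $\mathcal{O}$, opposite to that of $g$. The first thing I would establish is that $h\in L^1(\mathcal{O},d_{\mathcal{O}}^{\alpha}\,dx)$: by Tonelli this reduces to estimating $\int_{\mathcal{O}}d_{\mathcal{O}}(x)^{\alpha}|x-y|^{-N-2\alpha}\,dx$ for $y\in\R^N\setminus\mathcal{O}$, which I expect to decay like $(1+|y|)^{-N-2\alpha}$ far from $\mathcal{O}$ (because $\mathcal{O}$ is bounded and $d_{\mathcal{O}}^{\alpha}\in L^1(\mathcal{O})$) and to carry only an integrable $\mathrm{dist}(y,\partial\mathcal{O})^{-\alpha}$–type singularity as $y\to\partial\mathcal{O}$ (via a boundary–flattening/dilation argument, using $\alpha<1$ and that $\mathcal{O}$ is $C^2$); combined with the integrability hypothesis on $g$ this yields finiteness. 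Granting this, Proposition \ref{pr 2.1} makes $\mathbb{G}_{\alpha,\mathcal{O}}[h]$ meaningful, and I would simply put
$$u_g:=u_0-\mathbb{G}_{\alpha,\mathcal{O}}[h]\quad\text{in }\mathcal{O},\qquad u_g:=g\quad\text{in }\R^N\setminus\mathcal{O}.$$

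Next I would check that this $u_g$ solves (\ref{eq 2.5}) in the very weak sense. Testing against $\xi\in C^\infty_c(\mathcal{O})$ and combining the defining identity of $u_0$ (the case $g=0$ in (\ref{e 2.1})) with the relation $\int_{\mathcal{O}}\mathbb{G}_{\alpha,\mathcal{O}}[h]\,(-\Delta)^\alpha\xi\,dx=\int_{\mathcal{O}}h\,\xi\,dx$, one obtains
$$\int_{\mathcal{O}}u_g\,(-\Delta)^\alpha\xi\,dx=\int_{\mathcal{O}}\xi\,d\nu-\int_{\mathcal{O}}h\,\xi\,dx=\int_{\mathcal{O}}\xi\,d\nu-\int_{\mathcal{O}}\xi\,(-\Delta)^\alpha\tilde g\,dx,$$
which is exactly (\ref{e 2.1}). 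For uniqueness I would take the difference $w$ of two very weak solutions: it lies in $L^1(\mathcal{O},d_{\mathcal{O}}^{\alpha}\,dx)$ and satisfies $\int_{\mathcal{O}}w\,(-\Delta)^\alpha\xi\,dx=0$ for all $\xi\in C^\infty_c(\mathcal{O})$, i.e.\ $w$ is a very weak solution of the homogeneous problem ($\nu=0$, $g=0$), so $w\equiv 0$ a.e.\ in $\mathcal{O}$ by the uniqueness for that problem recalled from \cite{CV1}.

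It remains to compare $u_g$ with $u_0$, where the sign of $h$ is the whole point. If $g\ge 0$, then $h\le 0$ in $\mathcal{O}$, hence $-h\ge 0$; since the Green kernel $G_{\alpha,\mathcal{O}}$ is nonnegative — equivalently, by applying the Kato inequality (\ref{sign}) of Proposition \ref{pr 2.1} to $\mathbb{G}_{\alpha,\mathcal{O}}[-h]$ — we get $\mathbb{G}_{\alpha,\mathcal{O}}[h]\le 0$ in $\mathcal{O}$, so $u_g-u_0=-\mathbb{G}_{\alpha,\mathcal{O}}[h]\ge 0$ in $\mathcal{O}$, and outside $\mathcal{O}$ one has $u_g=g\ge 0=u_0$. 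If $g\le 0$, the identical computation with the signs reversed gives $u_g\le u_0$. A slightly more hands–on variant is to first treat bounded, compactly supported data $g_n$ — for which $(-\Delta)^\alpha\widetilde{g_n}$ is bounded with fast decay — prove the comparison there, and then pass to the monotone limit.

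The step I expect to be the main obstacle is exactly the integrability $(-\Delta)^\alpha\tilde g\in L^1(\mathcal{O},d_{\mathcal{O}}^{\alpha}\,dx)$, i.e.\ pinning down the boundary and decay behaviour of $y\mapsto\int_{\mathcal{O}}d_{\mathcal{O}}(x)^{\alpha}|x-y|^{-N-2\alpha}\,dx$ on $\R^N\setminus\mathcal{O}$; this is where the abstract $L^1$–theory behind $\mathbb{G}_{\alpha,\mathcal{O}}$ and Proposition \ref{pr 2.1} has to be matched with the hypothesis on $g$. After that, the verification of (\ref{e 2.1}), the uniqueness, and the sign comparison are all routine once one uses the nonnegativity of $\mathbb{G}_{\alpha,\mathcal{O}}$.
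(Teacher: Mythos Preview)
Your approach is different from the paper's. The paper does not write $u_g=u_0-\mathbb{G}_{\alpha,\mathcal{O}}[(-\Delta)^\alpha\tilde g]$ directly; instead it regularises $\nu$ by smooth $\nu_n$, fixes a subdomain $\mathcal{O}_1\Subset\mathcal{O}$ (on which $(-\Delta)^\alpha\tilde g$ is automatically $C^1$), solves the problem there classically with exterior datum $\tilde g$ versus $0$, invokes the strong Comparison Principle (Lemma~\ref{lm 2.0}) at that classical level to get $v_n\le u_n$, and then passes to the limit in $L^1$ via Marcinkiewicz bounds. Thus the comparison is obtained from $\tilde g\ge 0$ as an exterior datum, rather than through the sign of $h$ and the nonnegativity of $\mathbb{G}_{\alpha,\mathcal{O}}$.

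Your direct reduction is cleaner in spirit, but the step you yourself single out as ``the main obstacle'' really is a gap as written. You correctly compute that
\[
\Phi(y):=\int_{\mathcal{O}}d_{\mathcal{O}}(x)^{\alpha}\,|x-y|^{-N-2\alpha}\,dx
\]
behaves like $\mathrm{dist}(y,\partial\mathcal{O})^{-\alpha}$ as $y\to\partial\mathcal{O}$ from outside. But calling this singularity ``integrable'' and then inferring finiteness of $\int_{\mathcal{O}^c}|g|\,\Phi\,dy$ from the hypothesis on $g$ does not work: the weight $(1+|y|)^{-(N-2\alpha)}$ is bounded above and below on any neighbourhood of the bounded set $\partial\mathcal{O}$, so near the boundary the assumption gives nothing beyond $g\in L^1_{\loc}$. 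Concretely, if $g(z)\sim\mathrm{dist}(z,\mathcal{O})^{-\beta}$ near $\partial\mathcal{O}$ with $1-\alpha<\beta<1$, then $g$ is in the stated class yet $\int|g|\,\Phi=\infty$, so $h\notin L^1(\mathcal{O},d_{\mathcal{O}}^{\alpha}dx)$ and Proposition~\ref{pr 2.1} cannot be invoked as written.

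The paper's device of staying on $\mathcal{O}_1\Subset\mathcal{O}$, where $h$ is smooth and bounded, is precisely what sidesteps this boundary difficulty. Your final ``hands--on variant'' --- approximating $g$ by bounded, compactly supported $g_n$ and passing to a monotone limit --- is essentially the right repair and is much closer to the paper's argument; fleshed out (with care that the limit stays in $L^1(\mathcal{O},d_{\mathcal{O}}^{\alpha}dx)$), it would close the gap.
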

\begin{proof}
Let  $\nu_n$ be a $C^2$ sequence of functions converging to $\nu$ in the dual sense of $C(\bar{\mathcal{O}})$.
For $\xi\in C^\infty_c(\mathcal{O})$, there exists $r_1>0$ such that
$${\rm supp}\,\xi\subset \mathcal{O}_1:=\{x\in \mathcal{O}:\, {\rm dist}(x,\partial\mathcal{O})> r_1\}.$$
Let $u_n$ be the classical solution of
 \begin{equation}\label{eq 2.4}
\arraycolsep=1pt
\begin{array}{lll}
 \displaystyle
 \displaystyle  (-\Delta)^\alpha   u = \nu_n\quad
  &{\rm in}\quad  \mathcal{O}_1,\\[2mm]
 \phantom{(-\Delta)^\alpha }
 u=\tilde g\quad &{\rm in}\quad \R^N\setminus \mathcal{O}_1.
 \end{array}
\end{equation}
Since $\tilde g=0$ in $\mathcal{O}$, then
$(-\Delta)^\alpha \tilde g\in C^1_{loc}(\mathcal{O})$. Let
  $w_n=u_n-\tilde g$ and $w_n$ is a classical solution of
 $$
 \arraycolsep=1pt
\begin{array}{lll}
 \displaystyle
 \displaystyle  (-\Delta)^\alpha   u = \nu_n-(-\Delta)^\alpha\tilde g\quad
  &{\rm in}\quad  \mathcal{O}_1,\\[2mm]
 \phantom{(-\Delta)^\alpha }
 u=0\quad &{\rm in}\quad \R^N\setminus \mathcal{O}_1.
 \end{array}
 $$
 Let $v_n$ be the classical solution of
 $$
 \arraycolsep=1pt
\begin{array}{lll}
 \displaystyle
 \displaystyle  (-\Delta)^\alpha   u = \nu_n\quad
  &{\rm in}\quad  \mathcal{O}_1,\\[2mm]
 \phantom{(-\Delta)^\alpha }
 u=0\quad &{\rm in}\quad \R^N\setminus \mathcal{O}_1.
 \end{array}
 $$

If $g\ge0$, it follows by (\ref{2.0}) that $(-\Delta)^\alpha \tilde g\ge 0$, and by Comparison Principle, we have that
 \begin{equation}\label{e 2.2}
 v_n\le u_n\quad{\rm in}\quad \R^N.
 \end{equation}
By the Integration by Parts formula, see \cite[Lemma 2.2]{CV1},
we know that
\begin{equation}\label{e 2.3}
\int_{\mathcal{O} } w_n (-\Delta)^\alpha \xi dx=\int_{\mathcal{O} } \xi \nu_ndx- \int_{\mathcal{O} } \xi (-\Delta)^\alpha  \tilde g dx.
\end{equation}
From   \cite[Proposition 2.2]{CV1}, it holds that
\begin{eqnarray*}
\norm{w_n}_{M^\frac{N}{N-2\alpha}(\mathcal{O}_1)} &\le & c_1 \norm{\nu_n}_{L^1(\mathcal{O}_1)}+ c_1\norm{(-\Delta)^\alpha  \tilde g }_{L^1(\mathcal{O}_1)} \\
   &\le & c_2 \norm{\nu}_{\mathfrak{M}(\mathcal{O})}+ c_2\norm{(-\Delta)^\alpha  \tilde g }_{L^1(\mathcal{O}_1)}
\end{eqnarray*}
where  $M^\frac{N}{N-2\alpha}(\mathcal{O}_1)$ is the Marcinkiewicz  space with exponent $\frac{N}{N-2\alpha}$ in $\mathcal{O}_1$ and $\mathfrak{M}(\mathcal{O})$
is the bounded Radon measure space in $\mathcal{O}$.
By  \cite[Proposition 2.6]{CV1},  there exists $u_g\in L^1(\mathcal{O})$ such that, up to some subsequence,
$$v_n\to u_0,\ \ w_n\to u_g\quad{\rm in}\quad L^1(\mathcal{O})\quad{\rm as}\ \ n\to+\infty.$$
Passing the limit in (\ref{e 2.3})  as $n\to+\infty$, we have that
$$\int_{\mathcal{O}} u_g (-\Delta)^\alpha \xi\, dx=\int_{\mathcal{O} } \xi\, d \nu - \int_{\mathcal{O} } \xi (-\Delta)^\alpha  \tilde g \,dx. $$
It follows by (\ref{e 2.2})  that
$$u_g\ge u_0\quad{\rm in}\quad\R^N.$$

Now we prove  the uniqueness.  Assume that problem (\ref{eq 2.5}) has two solutions $u_1,\,u_2$, then $w=u_1-u_2$ is a very weak solution of
 $$\arraycolsep=1pt
\begin{array}{lll}
 (-\Delta)^\alpha &u=0\quad & {\rm in}\quad\mathcal{O},\\[2mm]
&u=0\quad & {\rm in}\quad \R^N\setminus\mathcal{O}.
\end{array}
$$
Then nonhomogeneous term is zero, of course, which is in $L^1(\mathcal{O})$, so by applying Proposition \ref{pr 2.1}, we have that $w\equiv 0$ a.e. in $\mathcal{O}$.
The proof is complete. \end{proof}

\begin{remark}
From Divergence theorem, the following identity holds
$$\int_{\mathcal{O}}(-\Delta)  \xi(x)\, dx=0,\quad \forall\, \xi\in C_c^\infty(\mathcal{O}).$$
 In contrast with the Laplacian case, the corresponding identity for the fractional Laplacian reads
$$\int_{\mathcal{O}}(-\Delta)^\alpha \xi(x)\, dx=\int_{\mathcal{O}} \xi (x)\int_{\R^N\setminus \mathcal{O}}\frac{c_{N,\alpha}}{|x-y|^{N+2\alpha}}\, dy dx,\quad \forall\, \xi\in C_c^\infty(\mathcal{O}),$$
which could be obtained from (\ref{e 2.1}) by the solution  $u\equiv1$  of (\ref{eq 2.5}) taking $\nu=0$ and $g=1$ in $\R^N\setminus \mathcal{O}$.
\end{remark}

\begin{corollary}\label{cr 3}
Let $\mathcal{O}_1$ and $\mathcal{O}_2$ be   $C^2$ domains  such that
$$\mathcal{O}_1\subset \mathcal{O}_2.$$
Then
\begin{equation}\label{eq 2.6}
G_{\alpha,\mathcal{O}_1}(x,y) \le G_{\alpha,\mathcal{O}_2}(x,y), \quad \forall\, (x,y)\in \R^N\times\R^N,\ x\not=y.
\end{equation}

\end{corollary}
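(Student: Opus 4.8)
My plan is to deduce (\ref{eq 2.6}) from the strong Comparison Principle, Lemma~\ref{lm 2.0}, applied to the difference of the two Green functions \emph{after the common singularity has been subtracted off}. By the symmetry $G_{\alpha,\mathcal{O}}(x,y)=G_{\alpha,\mathcal{O}}(y,x)$ it is enough to fix $y$ and prove $G_{\alpha,\mathcal{O}_1}(\cdot,y)\le G_{\alpha,\mathcal{O}_2}(\cdot,y)$ on $\R^N$; if $y\notin\mathcal{O}_1$ then $G_{\alpha,\mathcal{O}_1}(\cdot,y)\equiv0$ and there is nothing to prove, so assume $y\in\mathcal{O}_1$. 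Let $\Phi_y$ denote the fundamental solution of $(-\Delta)^\alpha$ in $\R^N$ (a positive multiple of $|x-y|^{2\alpha-N}$, so that $(-\Delta)^\alpha\Phi_y=\delta_y$), and for $i=1,2$ set $H_i:=\Phi_y-G_{\alpha,\mathcal{O}_i}(\cdot,y)$. Since $G_{\alpha,\mathcal{O}_i}(\cdot,y)$ vanishes in $\R^N\setminus\mathcal{O}_i$ and solves $(-\Delta)^\alpha u=\delta_y$ in $\mathcal{O}_i$, each $H_i$ solves $(-\Delta)^\alpha H_i=0$ in $\mathcal{O}_i$ with $H_i=\Phi_y$ in $\R^N\setminus\mathcal{O}_i$; as $y$ is interior to $\mathcal{O}_i$ the datum $\Phi_y$ is bounded near $\partial\mathcal{O}_i$, and interior regularity for the fractional Laplacian makes $H_i$ continuous (indeed smooth) throughout $\mathcal{O}_i$. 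Comparison with $0$ and with $\Phi_y$ also gives $0\le H_i\le\Phi_y$, equivalently $0\le G_{\alpha,\mathcal{O}_i}(\cdot,y)\le\Phi_y$.

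The key point is that, although $G_{\alpha,\mathcal{O}_1}(\cdot,y)$ and $G_{\alpha,\mathcal{O}_2}(\cdot,y)$ each blow up at $y$, their difference
$$w:=G_{\alpha,\mathcal{O}_2}(\cdot,y)-G_{\alpha,\mathcal{O}_1}(\cdot,y)=H_1-H_2$$
extends to a function that is continuous on $\mathcal{O}_1$ (including at the pole $y$) and satisfies $(-\Delta)^\alpha w=0$ pointwise in $\mathcal{O}_1$, since both $H_1$ and $H_2$ are classical solutions of the homogeneous equation on $\mathcal{O}_1$ (for $H_2$ this uses $\mathcal{O}_1\subseteq\mathcal{O}_2$). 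I would then verify the hypotheses of Lemma~\ref{lm 2.0} for the pair $u_2=w$, $u_1\equiv0$ on $\mathcal{D}=\mathcal{O}_1$, with $f_1=f_2=0$: in $\R^N\setminus\mathcal{O}_1$ one has $w=G_{\alpha,\mathcal{O}_2}(\cdot,y)\ge0$, so the exterior data satisfy $w\mid_{\R^N\setminus\mathcal{O}_1}\ge0$; near $\partial\mathcal{O}_1$ the boundary regularity of the Green kernel in $C^2$ domains gives $G_{\alpha,\mathcal{O}_1}(\cdot,y)\to0$ while $G_{\alpha,\mathcal{O}_2}(\cdot,y)\ge0$, so $\liminf_{x\to\partial\mathcal{O}_1}w(x)\ge0$; and if $\mathcal{O}_1$ is unbounded the bound $0\le G_{\alpha,\mathcal{O}_i}(x,y)\le\Phi_y(x)\to0$ as $|x|\to\infty$ forces $w(x)\to0$, hence $\liminf_{|x|\to\infty}w(x)\ge0$. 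Lemma~\ref{lm 2.0} then yields $w\ge0$ in $\mathcal{O}_1$, i.e. $G_{\alpha,\mathcal{O}_1}(\cdot,y)\le G_{\alpha,\mathcal{O}_2}(\cdot,y)$ on $\mathcal{O}_1$, while outside $\mathcal{O}_1$ the inequality is trivial; this is (\ref{eq 2.6}).

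I expect the only real work to be the bookkeeping of standard facts that legitimize the hypotheses of Lemma~\ref{lm 2.0}: continuity of the fractional Green kernel of a $C^2$ domain up to the boundary with vanishing boundary values, its nonnegativity and smoothness away from the pole, and its domination by $\Phi_y$ (which yields the decay at infinity in the unbounded case) --- all classical, see e.g. \cite{CS,CT}. When $\mathcal{O}_1$ and $\mathcal{O}_2$ are bounded there is a shorter route that avoids Lemma~\ref{lm 2.0} and uses Lemma~\ref{cpg} directly: the restriction of $G_{\alpha,\mathcal{O}_2}(\cdot,y)$ to $\mathcal{O}_1$ is precisely the very weak solution of $(-\Delta)^\alpha u=\delta_y$ in $\mathcal{O}_1$ with nonnegative, boundedly supported exterior datum $g=G_{\alpha,\mathcal{O}_2}(\cdot,y)\mid_{\R^N\setminus\mathcal{O}_1}$, whereas $G_{\alpha,\mathcal{O}_1}(\cdot,y)$ is the one with $g\equiv0$, so Lemma~\ref{cpg} gives $G_{\alpha,\mathcal{O}_2}(\cdot,y)=u_g\ge u_0=G_{\alpha,\mathcal{O}_1}(\cdot,y)$; the general case then follows by exhausting $\mathcal{O}_1$ and $\mathcal{O}_2$ by bounded $C^2$ subdomains and letting the corresponding Green functions increase to $G_{\alpha,\mathcal{O}_1}$ and $G_{\alpha,\mathcal{O}_2}$.
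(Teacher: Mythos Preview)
Your proposal is correct and follows the same overall strategy as the paper --- a comparison/maximum principle applied to the difference of the two Green functions --- but the implementations differ in a way worth noting. The paper splits into two cases: when $\mathcal{O}_1$ is bounded it invokes Lemma~\ref{cpg} exactly as in your ``shorter route''; when $\mathcal{O}_1$ is unbounded it works on $\mathcal{O}_1\setminus\{y\}$, uses only that $w(x)|x-y|^{N-2\alpha}\to0$ as $x\to y$, and then compares $w$ against $\epsilon\,G_{\alpha,\R^N}(\cdot,y)$ on $\mathcal{O}_1\setminus B_\epsilon(y)$ for each $\epsilon>0$, sending $\epsilon\to0$ at the end. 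Your main route is cleaner: by passing to the regular parts $H_i=\Phi_y-G_{\alpha,\mathcal{O}_i}(\cdot,y)$ you make $w=H_1-H_2$ genuinely $\alpha$-harmonic and continuous across the pole, so Lemma~\ref{lm 2.0} applies directly on all of $\mathcal{O}_1$ with no $\epsilon$-regularization and no case split. The exhaustion argument you sketch at the end is a third valid path that the paper does not use.
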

\begin{proof}
\emph{Case 1:} $\mathcal{O}_1$ is bounded. Since $\mathcal{O}_1\subset \mathcal{O}_2$, then for fixed $y\in \mathcal{O}_1$,
$G_{\alpha,\mathcal{O}_1}(\cdot,y), G_{\alpha,\mathcal{O}_2}(\cdot,y)$ are the solutions to
$$(-\Delta)^\alpha u=\delta_y\quad{\rm in}\ \ \mathcal{O}_1,$$
subjecting to
$u=0$ in $\R^N\setminus \mathcal{O}_1$ and to $u=G_{\alpha,\mathcal{O}_2}(\cdot,y)$ in $\R^N\setminus \mathcal{O}_1$, respectively.
Then applying Lemma \ref{cpg}, we obtain that
$$G_{\alpha,\mathcal{O}_1}(x,y) \le G_{\alpha,\mathcal{O}_2}(x,y), \quad \ \forall (x,y)\in \R^N\times\R^N,\ x\not=y.$$
For fixed  $y\in \R^N\setminus\mathcal{O}_1$, obvious that $G_{\alpha,\mathcal{O}_1}(\cdot,y)\equiv 0$ in $\R^N$ and
$G_{\alpha,\mathcal{O}_2}(\cdot,y)\ge 0$ in $\R^N\setminus\{y\}$.
Thus, (\ref{eq 2.6}) holds.

\smallskip

Case 2: $\mathcal{O}_1$ is unbounded.  For any $y\in\mathcal{O}_1$, let $w=G_{\alpha,\mathcal{O}_1}(\cdot,y)-G_{\alpha,\mathcal{O}_2}(\cdot,y)$,
then $w$ is a classical solution of
$$
 \arraycolsep=1pt
\begin{array}{lll}
 \displaystyle
 \displaystyle  (-\Delta)^\alpha   w= 0\quad
  &{\rm in}\quad  \mathcal{O}_1\setminus\{y\},\\[2mm]
 \phantom{(-\Delta)^\alpha }
 w\le 0\quad &{\rm in}\quad \R^N\setminus \mathcal{O}_1.
 \end{array}
 $$
By the basic facts
$$\lim_{x\to y}G_{\alpha,\mathcal{O}_1}(x,y)|x-y|^{N-2\alpha}=\lim_{x\to y}G_{\alpha,\mathcal{O}_2}(x,y)|x-y|^{N-2\alpha}=c_3, $$
 we derive that
$$\lim_{x\to y}w(x)|x-y|^{N-2\alpha}=0,$$
thus, for any $\epsilon$, there exists $r>0$ such that
 $w(x)\le \epsilon G_{\alpha,\R^N}(x,y), \ \forall x\in B_\epsilon(0)\setminus\{y\}$.
 We observe that
\begin{equation}\label{a 1}
 (-\Delta)^\alpha\left(w-\epsilon G_{\alpha,\R^N}(\cdot,y)\right)=0\quad{\rm in}\quad \mathcal{O}_1\setminus B_\epsilon(0).
\end{equation}
Since $G_{\alpha,\R^N}(\cdot,y)>0$ in $\R^N\setminus \mathcal{O}_1$
and
$$\lim_{|x|\to+\infty}\epsilon G_{\alpha,\R^N}(\cdot,y)=0=\lim_{|x|\to+\infty}w(x) $$
so if
$$\sup_{x\in\R^N\setminus\{y\}}\left(w-\epsilon G_{\alpha,\R^N}(\cdot,y)\right)>0,$$
there exists $x_0\in \mathcal{O}_1\setminus B_\epsilon(0)$ such that
$$w(x_0)-\epsilon G_{\alpha,\R^N}(x_0,y)=\sup_{x\in\R^N\setminus\{y\}}\left(w-\epsilon G_{\alpha,\R^N}(\cdot,y)\right)>0,$$
and from the definition of the fractional Laplacian, we obtain that
\begin{eqnarray*}
&&(-\Delta)^\alpha\left(w-\epsilon G_{\alpha,\R^N}(\cdot,y)\right)(x_0)   \\
    &&=c_{N,\alpha}\int_{\R^N}\frac{\left(w(x_0)-\epsilon G_{\alpha,\R^N}(x_0,z)\right) -\left(w(z)-\epsilon G_{\alpha,\R^N}(z,y)\right) }{|x_0-z|^{N-2\alpha}}dz>0,
\end{eqnarray*}
which contradicts (\ref{a 1}). This is to say that for any $\epsilon>0$,
$$w\le \epsilon G_{\alpha,\R^N}(\cdot,y)\quad{\rm in}\quad\R^N\setminus\{y\}.$$
Therefore,  $w\le 0$ in $\R^N\setminus\{y\}$ and (\ref{eq 2.6}) holds.
The proof is complete. \end{proof}

Next we make a general estimate of the very weak solution of fractional equation with nonlinearity.
\begin{lemma}\label{cp}
Assume that $\mathcal{D}$ is a $C^2$ domain in $\R^N$ (not necessary bounded),  $f:[0,+\infty)\to [0,+\infty)$
and $\nu$ is a nonnegative Radon measure satisfying
\begin{equation}\label{1.3}
 \int_{\Omega}\frac{1}{1+|y|^{N-2\alpha}}d\nu<+\infty.
\end{equation}
Let $u\ge0$ be a very weak solution of
\begin{equation}\label{eq 2.2}
\arraycolsep=1pt
\begin{array}{lll}
 \displaystyle
 \displaystyle  (-\Delta)^\alpha   u =  f(u)+\nu\quad
  &{\rm in}\quad  \mathcal{D},\\[2mm]
 \phantom{(-\Delta)^\alpha }
 u\ge0\quad &{\rm in}\quad \R^N\setminus \mathcal{D}.
 \end{array}
\end{equation}
Then
$$u\ge  \mathbb{G}_{\alpha,\mathcal{D}}[\nu]\quad{\rm a.e.\ in}\ \mathcal{D}.$$
\end{lemma}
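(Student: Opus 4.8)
The plan is to deduce $u\ge\mathbb{G}_{\alpha,\mathcal{D}}[\nu]$ from the bounded–domain comparison principle (Lemma \ref{cpg}) by exhausting $\mathcal{D}$, and then to pass to the limit using the monotonicity of Green's functions from Corollary \ref{cr 3}.

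First I would fix an increasing sequence of bounded $C^2$ domains $\mathcal{D}_n$ with $\overline{\mathcal{D}_n}\subset\mathcal{D}_{n+1}$ and $\bigcup_n\mathcal{D}_n=\mathcal{D}$. Put $g_n:=u|_{\R^N\setminus\mathcal{D}_n}$, which is nonnegative and lies in $L^1(\R^N\setminus\mathcal{D}_n,\frac{dx}{1+|x|^{N+2\alpha}})$ since $u$ does on $\R^N$, and let $\mu_n$ be the restriction of the measure $f(u)\,dx+\nu$ to $\mathcal{D}_n$; since $\overline{\mathcal{D}_n}$ is compact, $f(u)\in L^1(\mathcal{D}_n)$ and $\nu(\overline{\mathcal{D}_n})<+\infty$, so $\mu_n$ is a bounded Radon measure in $\mathcal{D}_n$ and $u\in L^1(\mathcal{D}_n)$. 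The first step is to check that $u$ is the very weak solution (in the sense of \eqref{e 2.1}) of $(-\Delta)^\alpha v=\mu_n$ in $\mathcal{D}_n$ with $v=g_n$ in $\R^N\setminus\mathcal{D}_n$. Indeed, testing the defining identity \eqref{de 1.1} for $u$ against $\xi\in C^\infty_c(\mathcal{D}_n)\subset C^\infty_c(\mathcal{D})$, the extra contribution $\int_{\mathcal{D}\setminus\mathcal{D}_n}u\,(-\Delta)^\alpha\xi\,dx$ to be subtracted on the left and the extra contribution coming from replacing the zero extension of $u|_{\R^N\setminus\mathcal{D}}$ by that of $g_n$ on the right both reduce, via \eqref{2.0} and Fubini, to the single double integral $-c_{N,\alpha}\int_{\mathcal{D}_n}\xi(x)\int_{\mathcal{D}\setminus\mathcal{D}_n}\frac{u(y)}{|x-y|^{N+2\alpha}}\,dy\,dx$; hence the $\mathcal{D}$-identity turns into the $\mathcal{D}_n$-identity.

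Since $g_n\ge0$, Lemma \ref{cpg} then gives $u\ge\mathbb{G}_{\alpha,\mathcal{D}_n}[\mu_n]$ a.e.\ in $\mathcal{D}_n$, and because $\mu_n\ge\nu$ on $\mathcal{D}_n$ and $G_{\alpha,\mathcal{D}_n}\ge0$ we obtain
\[
u\ \ge\ \mathbb{G}_{\alpha,\mathcal{D}_n}[\nu]\ =\ \int_{\mathcal{D}_n}G_{\alpha,\mathcal{D}_n}(\cdot,y)\,d\nu(y)\qquad\text{a.e. in }\mathcal{D}_n,\ \ \text{for every }n.
\]
By Corollary \ref{cr 3}, the functions $G_{\alpha,\mathcal{D}_n}(x,y)$, extended by $0$ when $y\notin\mathcal{D}_n$, increase in $n$ and are dominated by $G_{\alpha,\mathcal{D}}(x,y)$; in fact $G_{\alpha,\mathcal{D}_n}(x,y)\uparrow G_{\alpha,\mathcal{D}}(x,y)$ for all $x\ne y$, the Green function of the unbounded domain being precisely this increasing limit over an exhaustion. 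The monotone convergence theorem then gives $\mathbb{G}_{\alpha,\mathcal{D}_n}[\nu](x)\uparrow\mathbb{G}_{\alpha,\mathcal{D}}[\nu](x)$ for a.e.\ $x\in\mathcal{D}$, the limit being finite for a.e.\ $x$ because $G_{\alpha,\mathcal{D}}(x,y)\le c\,|x-y|^{-(N-2\alpha)}$ together with \eqref{1.3} forces $\mathbb{G}_{\alpha,\mathcal{D}}[\nu]\in L^1_{\loc}(\mathcal{D})$. Letting $n\to\infty$ in the displayed inequality and using $\bigcup_n\mathcal{D}_n=\mathcal{D}$ yields $u\ge\mathbb{G}_{\alpha,\mathcal{D}}[\nu]$ a.e.\ in $\mathcal{D}$.

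The main obstacle I anticipate is the limit passage, and precisely the identity $\lim_n G_{\alpha,\mathcal{D}_n}=G_{\alpha,\mathcal{D}}$: Corollary \ref{cr 3} yields only the inequality, so one must argue that the increasing limit $G_\infty:=\sup_n G_{\alpha,\mathcal{D}_n}$ is itself the Green function of $\mathcal{D}$. This can be done by observing that $w:=G_{\alpha,\mathcal{D}}(\cdot,y)-G_\infty(\cdot,y)\ge0$ is $\alpha$-harmonic in $\mathcal{D}$, while its exterior values $G_{\alpha,\mathcal{D}}(\cdot,y)$ on the shrinking sets $\R^N\setminus\mathcal{D}_n$ tend to $0$ as $n\to\infty$, so that $w\equiv0$ by the same $\epsilon\,G_{\alpha,\R^N}(\cdot,y)$–barrier argument used in the proof of Corollary \ref{cr 3}. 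A secondary point requiring care is the rigorous verification of the restriction step, since the nonlocality of $(-\Delta)^\alpha$ makes $(-\Delta)^\alpha\xi$ nonvanishing outside $\mathrm{supp}\,\xi$; the cancellation above handles it, but must be justified using the integrability of $u$ and of $g$ against $\frac{dx}{1+|x|^{N+2\alpha}}$ and Fubini's theorem.
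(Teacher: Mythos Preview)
Your proposal is correct and follows essentially the same route as the paper: exhaust $\mathcal{D}$ by bounded $C^2$ subdomains, invoke the bounded-domain comparison (Lemma \ref{cpg}) to get $u\ge \mathbb{G}_{\alpha,\mathcal{D}_n}[\nu]$, and pass to the limit via the monotonicity of Green's functions (Corollary \ref{cr 3}). You are in fact more careful than the paper on two points it glosses over---the verification that $u$ satisfies the weak identity on each $\mathcal{D}_n$ with exterior datum $g_n$, and the justification that $G_{\alpha,\mathcal{D}_n}\uparrow G_{\alpha,\mathcal{D}}$---while the paper inserts an extra intermediate step (comparing $u_n$ with $v_n$ via Kato's inequality) that your direct use of $G_{\alpha,\mathcal{D}_n}\ge0$ renders unnecessary.
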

\begin{proof}
By (\ref{1.3}), we observe that $ \mathbb{G}_{\mathcal{D},\alpha}[\nu]$ is well defined. Let $D_n$ be a $C^2$ bounded domain such that
$$  \mathcal{D}\cap B_{n}(0) \subset  D_n\subset \mathcal{D}\cap B_{n+1}(0)\quad{\rm and}\quad \mathcal{D}=\bigcup_{n=1}D_n.$$
Let $u_n$ be the positive very weak solution of
$$
\arraycolsep=1pt
\begin{array}{lll}
(-\Delta)^\alpha &u_n=u^p+\nu\quad & {\rm in}\quad D_n,\\[2mm]
 &u_n=0\quad & {\rm in}\quad \R^N\setminus D_n.
\end{array}
$$
Since $u\ge0$, then it follows by Corollary \ref{cr 3} that for any $n$,
\begin{equation}\label{2.02}
u \ge  u_n\quad{\rm a.e.\ in}\ \ \R^N.
\end{equation}

Let
$v_n$ be the positive very weak solution of
$$
\arraycolsep=1pt
\begin{array}{lll}
(-\Delta)^\alpha &v_n=\nu\quad & {\rm in}\quad D_n,\\[2mm]
 &v_n=0\quad & {\rm in}\quad \R^N\setminus D_n.
\end{array}
$$
Then $w_n:=v_n-u_n$ is a weak solution of
$$
\arraycolsep=1pt
\begin{array}{lll}
(-\Delta)^\alpha &w_n=-u^p\quad & {\rm in}\quad D_n,\\[2mm]
 &w_n=0\quad & {\rm in}\quad\R^N\setminus D_n.
\end{array}
$$
From the Kato's inequality (\ref{sign+}) with $\xi$ the first eigenfunction of $((-\Delta)^\alpha, D_n)$, we have that
$$w_n\le 0\quad{a.e.\ in}\ D_n,$$
thus, together with (\ref{2.02}), we have that
\begin{equation}\label{2.5}
 u\ge v_n\quad{\rm a.e.\ in}\ \ \R^N.
\end{equation}
Passing the limit as $n\to\infty$, we have that
$$u\ge  \mathbb{G}_{\alpha,\mathcal{D}}[\nu].$$
The proof is complete.
\end{proof}

\begin{remark}\label{rm 2}
Under the hypothesises of Lemma \ref{cp}, let $\mu$ be a nonnegative Radon measure
such that
$$\mu\le \nu.$$
Then the nonnegative weak solution $u$ of (\ref{eq 2.2}) satisfies
$$u\ge  \mathbb{G}_{\alpha,\mathcal{D}}[\mu]\quad{\rm a.e.\ in}\ \mathcal{D}.$$

\end{remark}

 Let $\tau_0<0$ and   $\{\tau_j\}_j$ be the sequence generated by
\begin{equation}\label{2.1}
 \tau_j=2\alpha+p\tau_{j-1}\quad{\rm for}\quad  j=1,2,3\cdots.
\end{equation}
\begin{lemma}\label{lm 2.1}
Assume that
\begin{equation}\label{2.00}
 p\in\left(0,\ 1+\frac{2\alpha}{-\tau_0}\right),
\end{equation}
  then $\{\tau_j\}_j$ is an increasing sequence and
there exists $j_0\in\N $  such that
\begin{equation}\label{2.3}
 \tau_{j_0}\ge0\quad {\rm and}\quad \tau_{j_0-1}<0.
\end{equation}
\end{lemma}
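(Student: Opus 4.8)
The plan is to reduce the whole statement to the single sign condition $\tau_1>\tau_0$, which turns out to be exactly equivalent to the hypothesis $(\ref{2.00})$. Indeed, from $(\ref{2.1})$ with $j=1$ one has $\tau_1-\tau_0=2\alpha+(p-1)\tau_0$; since $\tau_0<0$, multiplying the inequality $p<1+\frac{2\alpha}{-\tau_0}$ through by $-\tau_0>0$ and rearranging shows that $(\ref{2.00})$ holds if and only if $2\alpha+(p-1)\tau_0>0$, i.e. if and only if $\tau_1>\tau_0$. So under the hypothesis we may take $\tau_1-\tau_0>0$ as our starting fact.

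Next I would establish the monotonicity by induction. Subtracting consecutive instances of the recursion $(\ref{2.1})$ gives $\tau_{j+1}-\tau_j=p(\tau_j-\tau_{j-1})$ for all $j\ge 1$, hence by iteration $\tau_{j+1}-\tau_j=p^{\,j}(\tau_1-\tau_0)$. Since $p>0$ and $\tau_1-\tau_0>0$, every increment is strictly positive, so $\{\tau_j\}_j$ is strictly increasing. Telescoping this then yields the closed form $\tau_j=\tau_0+(\tau_1-\tau_0)\sum_{i=0}^{j-1}p^{\,i}$, which is the identity I will exploit to locate $j_0$.

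For the existence of $j_0$ I would split into the cases $p\ge 1$ and $0<p<1$. If $p\ge 1$, then $\sum_{i=0}^{j-1}p^{\,i}\ge j$, so $\tau_j\to+\infty$. If $0<p<1$, the geometric series converges and $\tau_j\to\tau_0+\frac{\tau_1-\tau_0}{1-p}$, which, using $\tau_1=2\alpha+p\tau_0$, simplifies to $\frac{2\alpha}{1-p}>0$. In either case the set $S:=\{\,j\in\N:\tau_j\ge 0\,\}$ is nonempty; since $\tau_0<0$ we have $0\notin S$, so $j_0:=\min S$ satisfies $j_0\ge 1$, $\tau_{j_0}\ge 0$, and, by minimality of $j_0$, $\tau_{j_0-1}<0$, which is precisely $(\ref{2.3})$.

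The only mildly delicate point — the ``hard part'', such as it is — is the case $0<p<1$, where the sequence does not diverge but merely converges, so one must verify that its limit $\frac{2\alpha}{1-p}$ is genuinely positive (not just nonnegative) in order to conclude $S\neq\emptyset$; this is where both $\tau_0<0$ and the normalization $\alpha>0$ are used. Everything else is routine bookkeeping with an affine recursion.
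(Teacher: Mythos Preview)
Your proof is correct and follows essentially the same route as the paper: both compute $\tau_1-\tau_0=2\alpha+(p-1)\tau_0>0$ from the hypothesis, iterate to $\tau_{j}-\tau_{j-1}=p^{j-1}(\tau_1-\tau_0)$ to get strict monotonicity, and then split into $p\ge 1$ (divergence) versus $0<p<1$ (limit $\frac{2\alpha}{1-p}>0$) to produce $j_0$. Your write-up is in fact slightly cleaner in that you make the well-ordering step for $j_0$ explicit.
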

\begin{proof}
For $p\in(0,1+\frac{2\alpha}{-\tau_0})$, we have that
$$\tau_1-\tau_0=2\alpha+\tau_0(p-1)>0$$
and
\begin{equation}\label{2.2}
\tau_j-\tau_{j-1} = p(\tau_{j-1}-\tau_{j-2})=p^{j-1} (\tau_1-\tau_0),
\end{equation}
which imply that the sequence $\{\tau_j\}_j$ is  increasing.
If $p\ge1 $, our conclusions are obvious. If $p\in(0,1)$,
we have that in the case that $\tau_1\ge0$, we are done, and in the case that $\tau_1<0$,
it deduces from (\ref{2.2}) that
\begin{eqnarray*}
\tau_j  &=&  \frac{1-p^j}{1-p}(\tau_1-\tau_0)+\tau_0\\
&\to&\frac{1}{1-p}(\tau_1-\tau_0)+\tau_0=\frac{2\alpha}{1-p}>0,\quad {\rm as}\quad j\to+\infty,
\end{eqnarray*}
 then there exists $j_0>0$ satisfying (\ref{2.3}).
\end{proof}
In the section 3, we shall apply lemma \ref{lm 2.1}  with   $\tau_0=2\alpha-N$
when $\Omega\supset (\R^N\setminus B_{r_0}(0))$,  with
$\tau_0=\alpha-N$ when $\Omega\supseteq \left\{x\in \mathbb{ R}^N:\, x\cdot a>0\right\}$. Furthermore, for $\tau_0\in(-N,-N+2\alpha]$, it deduces by the fact
$\tau_{j_0-1}<0$ that if $j_0\ge 2$,
$$
\tau_0+p\tau_{j_0-2}<-N.
$$

\section{ Nonexistence }

We prove the nonexistence of very weak solution of (\ref{eq 1.1}) by contradiction. Assume that problem (\ref{eq 1.1})
admits a  nonnegative solution $u$, we will obtain a contradiction from its decay at infinity.

\subsection{The whole domain or the exterior domain}

In the case that $\Omega=\R^N$, the Green's function is
$$G_{\alpha,\Omega}(x,y)=\frac{c_3}{|x-y|^{N-2\alpha}},\quad\forall\, x,y\in\R^N,\ x\not=y.$$

For the general exterior domain, the Green's kernel couldn't be given explicitly, but we can give the following
estimate, which will play an important role in the derivation of the decay at infinity of the nonnegative solution $u$ to  problem (\ref{eq 1.1}).
\begin{lemma}\label{lm 2.3}
Assume that  $N>2\alpha$, $\Omega\supseteq (\mathbb{R}^N\setminus B_{r_0}(0))$.
Let $G_{\alpha,\Omega}$ be the Green's function of $(-\Delta)^\alpha$ in $\Omega\times\Omega$,
then there exists $c_4>0$ such that for $x,y\in \R^N\setminus B_{4r_0}(0)$,
 \begin{equation}\label{3.2}
 G_{\alpha,\Omega}(x,y)\ge \frac{c_4}{|x-y|^{N-2\alpha}}.
 \end{equation}

\end{lemma}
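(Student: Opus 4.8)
\emph{Proof strategy.} The plan is to reduce (\ref{3.2}) to the exterior of a ball and then transport the explicitly known Green's kernel of a ball by a Kelvin transform. Since $\Omega$ is a $C^2$ domain with $\Omega\supseteq\Omega_0:=\R^N\setminus\overline{B_{r_0}(0)}$, Corollary \ref{cr 3} yields $G_{\alpha,\Omega}(x,y)\ge G_{\alpha,\Omega_0}(x,y)$ for every $x\ne y$, so it suffices to prove (\ref{3.2}) with $\Omega$ replaced by $\Omega_0$.

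I would then use the inversion $T(x)=r_0^2x/|x|^2$, an involution carrying $\Omega_0$ onto the punctured ball $B_{r_0}(0)\setminus\{0\}$, together with three standard facts: (a) the distance identity $|T(x)-T(y)|=\frac{r_0^2|x-y|}{|x|\,|y|}$; (b) the Kelvin transformation rule for the fractional Green's function,
$$G_{\alpha,\Omega_0}(x,y)=(|x|/r_0)^{2\alpha-N}(|y|/r_0)^{2\alpha-N}\,G_{\alpha,\,B_{r_0}(0)\setminus\{0\}}(T(x),T(y));$$
and (c) that $\{0\}$ has zero $\alpha$-capacity when $N>2\alpha$, so removing it does not change the Green's function, $G_{\alpha,\,B_{r_0}(0)\setminus\{0\}}=G_{\alpha,B_{r_0}(0)}$. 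For the ball I would invoke the Blumenthal--Getoor--Ray representation
$$G_{\alpha,B_{r_0}(0)}(\xi,\eta)=\kappa_{N,\alpha}\,|\xi-\eta|^{2\alpha-N}\int_0^{w(\xi,\eta)}\frac{t^{\alpha-1}}{(1+t)^{N/2}}\,dt,\qquad w(\xi,\eta)=\frac{(r_0^2-|\xi|^2)(r_0^2-|\eta|^2)}{r_0^2\,|\xi-\eta|^2},$$
with $\kappa_{N,\alpha}>0$, the integral converging precisely because $N/2>\alpha$.

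The hypothesis $|x|,|y|\ge 4r_0$ enters exactly in bounding $w$ from below: then $|T(x)|,|T(y)|\le r_0/4$, so $r_0^2-|T(x)|^2\ge\frac12 r_0^2$ (and likewise for $T(y)$), while $|T(x)-T(y)|\le|T(x)|+|T(y)|\le r_0/2$, whence $w(T(x),T(y))\ge 1$ and
$$\int_0^{w(T(x),T(y))}\frac{t^{\alpha-1}}{(1+t)^{N/2}}\,dt\ \ge\ \int_0^{1}\frac{t^{\alpha-1}}{(1+t)^{N/2}}\,dt\ =:\ c_*\ >\ 0 .$$
Hence $G_{\alpha,B_{r_0}(0)}(T(x),T(y))\ge\kappa_{N,\alpha}c_*\,|T(x)-T(y)|^{2\alpha-N}$; substituting this together with (a) into (b), the powers of $|x|$, $|y|$ and $r_0$ cancel identically and one is left with $G_{\alpha,\Omega_0}(x,y)\ge\kappa_{N,\alpha}c_*\,|x-y|^{2\alpha-N}$, i.e.\ (\ref{3.2}) holds with $c_4=\kappa_{N,\alpha}c_*$.

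The only step that is not a routine computation is the justification of the transformation rule (b) and the removability (c); both are classical in the potential theory of the fractional Laplacian (indeed the same intertwining yields the ball kernel above), but this is the part I would state carefully or cite. A self-contained alternative uses only the monotonicity of Corollary \ref{cr 3}: writing $G_{\alpha,\Omega_0}(\cdot,y)=c_3|\cdot-y|^{2\alpha-N}-h_y$ with $h_y\ge0$ and $(-\Delta)^\alpha h_y=0$ in $\Omega_0$, one majorises $h_y$ on $\R^N\setminus\Omega_0$ by a suitable multiple of $|x|^{2\alpha-N}$ (which is $(-\Delta)^\alpha$-harmonic off the origin) and deduces, via Lemma \ref{lm 2.0}, that $h_y(x)\lesssim r_0^{N-2\alpha}(|x|\,|y|)^{2\alpha-N}$; this gives (\ref{3.2}) once $|x|,|y|$ are large compared with $r_0$, the remaining bounded range of $(x,y)$ being absorbed into $c_4$ through the interior positivity and the near-diagonal asymptotics of $G_{\alpha,\Omega_0}$. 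I expect the Kelvin route to be cleaner, since it delivers (\ref{3.2}) on all of $\R^N\setminus B_{4r_0}(0)$ in one stroke.
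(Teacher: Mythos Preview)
Your Kelvin-transform argument is correct, and the cancellation in the last step works exactly as you describe. The paper, however, does not go this way: it takes essentially your ``self-contained alternative'', but implemented more directly than your sketch suggests. Fixing $y$ with $|y|\ge 4r_0$ (after scaling to $r_0=\tfrac12$), the paper writes down the explicit function
\[
\Gamma_y(x)=\frac{c_3}{|x-y|^{N-2\alpha}}-\frac{c_3|y|^{2\alpha-N}}{|x|^{N-2\alpha}},
\]
verifies by an elementary calculation that $\Gamma_y\le0$ on $\overline{B_{r_0}(0)}\supseteq\R^N\setminus\Omega$ (indeed $|x|\le\frac{|y|}{|y|+1}$ forces $|x-y|\ge|x|\,|y|$), and then invokes comparison in the spirit of Corollary~\ref{cr 3} to obtain $G_{\alpha,\Omega}(\cdot,y)\ge\Gamma_y$. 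For $|x|,|y|\ge4r_0$ one has $|x|\,|y|\ge\tfrac12|x-y|$, so the subtracted term is at most $2^{2\alpha-N}$ times the leading one, and $\Gamma_y(x)\ge(1-2^{2\alpha-N})\,c_3\,|x-y|^{2\alpha-N}$.

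Compared with your Kelvin route, this is lighter on prerequisites: it needs only the full-space fundamental solution and the comparison principle already proved in the paper, and avoids the intertwining identity (b), the capacity/removability statement (c), and the Blumenthal--Getoor--Ray formula. It also dispenses with the ``bounded range'' patching you describe in your alternative, since the single explicit subsolution $\Gamma_y$ covers all $|x|,|y|\ge4r_0$ at once. Your Kelvin approach, by contrast, is more structural --- it actually identifies $G_{\alpha,\Omega_0}$ exactly (via $G_{\alpha,B_{r_0}}$), so it would immediately upgrade to a two-sided estimate --- but for the purposes of Lemma~\ref{lm 2.3} the paper's subsolution argument is the shorter path.
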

\begin{proof}
 From the scaling property, see \cite[(1.2)]{CS}, for any $l>0$ and any bounded $C^{1,1}$ domain $\mathcal{O}$, there holds
 \begin{equation}\label{2.9}
G_{\alpha,\mathcal{O}}(x,y)=l^{2\alpha-N} G_{\alpha,l\mathcal{O}}(\frac xl,\frac yl).
 \end{equation}
 We may assume that $r_0=\frac12$.   Fixed  $y\in \R^N\setminus B_2(0)$, let $\Gamma_y$ be the solution of
\begin{equation}\label{3.3}
\arraycolsep=1pt
\begin{array}{lll}
 \displaystyle (-\Delta)^\alpha  u=\delta_y-|y|^{2\alpha-N}\delta_0\quad & {\rm in}\quad \R^N,\\[2mm]
 \displaystyle \lim_{|x|\to+\infty}u(x)=0.
\end{array}
\end{equation}
Then we have that
\begin{equation}\label{3.7}
 \Gamma_y(x)=\frac{c_3}{|x-y|^{N-2\alpha}}-\frac{c_3|y|^{2\alpha-N}}{|x|^{N-2\alpha}},\quad\forall\, x\in\R^N\setminus\{y,0\}.
\end{equation}
Denote
$$A_y=\left\{x\in\R^N\setminus\{y,0\}: \Gamma_y(x)\le 0\right\}$$
and $x\in A_y$ if and only if
$$|y-x|\ge|y||x|.$$

On the one hand, for $x$ satisfying
\begin{equation}\label{3.4}
 |y|-|x|\ge |y||x|,
\end{equation}
we have that $x\in A_y$. We observe that
 (\ref{3.4}) is equivalent to
 $$|x|\le \frac{|y|}{|y|+1},$$
that is,
$$B_{\frac{|y|}{|y|+1}}(0)\subset A_y,$$
which implies that for any $|y|\ge 2$,
$$B_{r_0}(0)\subset A_y.$$

On the other hand, for any $x\in A_y$, we have that
$$ |y|+|x|\ge |y||x|,$$
that is,
$$|x|\ge \frac{|y|}{|y|-1}.$$
So  for any $|y|\ge 2$,
$$ A_y \subset B_{\frac{|y|}{|y|-1}}(0)\subset B_2(0).$$

We see that $G_{\alpha, \Omega}(\cdot,y)$ is the very weak solution of
\begin{equation}\label{3.5}
\arraycolsep=1pt
\begin{array}{lll}
 \displaystyle (-\Delta)^\alpha  u=\delta_y\quad & {\rm in}\quad \Omega,\\[2mm]
  \phantom{(-\Delta)^\alpha }
 u=0\quad& {\rm in}\quad \R^N\setminus\Omega,\\[2mm]
 \displaystyle \lim_{|x|\to+\infty}u(x)=0.
\end{array}
\end{equation}
Since $\R^N\setminus\Omega\subset B_{r_0}(0)$,  it follows by Corollary \ref{cr 3}  that
\begin{equation}\label{3.6}
 G_{\alpha, \Omega}(\cdot,y)\ge \Gamma_y\quad {\rm in}\quad \R^N\setminus\{y\}.
\end{equation}
It follows by (\ref{3.7}) that for $|x|\ge 2$,
$$|y||x|\ge  \frac{|x|+|y|}2 \ge \frac{|x-y|}2,$$
which implies that
$$\Gamma_y(x)\ge \left(1-2^{2\alpha-N}\right) \frac{c_{N,\alpha}}{|x-y|^{N-2\alpha}}.$$
Thus, (\ref{3.2}) holds. The proof is complete.
\end{proof}

\begin{lemma}\label{lm 3.1}
Let   $\{\tau_j\}_j$ be defined by (\ref{2.1}) with $\tau_0=2\alpha-N$, $\nu$ be a positive measure and  $u$ be a nonnegative
solution of (\ref{eq 1.1}) verifying
$$u(x)\ge c_j|x|^{\tau_{j}},\quad \forall x\in \R^N\setminus B_{4r_0}(0)$$
for some $c_j>0$ and $j\le j_0-2$.
Then for $p\in (0,\frac{N }{N -2\alpha})$, there exists $c_{j+1}>0$   such that
$$u(x)\ge c_{j+1} |x|^{\tau_{j+1}},\quad \forall x\in \R^N\setminus B_{4r_0}(0).$$

\end{lemma}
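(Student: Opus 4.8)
The plan is to exploit that a nonnegative very weak solution dominates the Green potential of (a suitable piece of) its right-hand side, and then to feed the induction hypothesis $u(y)\ge c_j|y|^{\tau_j}$ into that representation, using the kernel estimate of Lemma~\ref{lm 2.3} to carry out a Riesz‑potential computation that upgrades the decay exponent from $\tau_j$ to $\tau_{j+1}=2\alpha+p\tau_j$.

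\emph{Step 1: domination by a Green potential.} For $R>4r_0$ set $\mu_R:=u^p\,\chi_{\{4r_0<|y|<R\}}\,dy$; this is a bounded measure whose (compact) support lies in $\Omega$, since $\{|y|>r_0\}\subseteq\Omega$, and $\mu_R\le u^p\,dy+\nu$ as measures. Arguing exactly as in the proof of Lemma~\ref{cp}, but comparing $u$ with $\mathbb{G}_{\alpha,D_n}[\mu_R]$ in place of $\mathbb{G}_{\alpha,D_n}[\nu]$ on an exhaustion $\{D_n\}$ of $\Omega$ (legitimate because on each bounded $D_n$ one has $u\ge\mathbb{G}_{\alpha,D_n}[u^p+\nu]\ge\mathbb{G}_{\alpha,D_n}[\mu_R]$ by Corollary~\ref{cr 3}), and then letting $n\to\infty$ with $G_{\alpha,D_n}\uparrow G_{\alpha,\Omega}$, one obtains
\[
u\ \ge\ \mathbb{G}_{\alpha,\Omega}[\mu_R]\qquad\text{a.e.\ in }\Omega,\qquad\text{for every }R>4r_0 .
\]

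\emph{Step 2: the Riesz estimate for large $|x|$.} Fix $x$ with $|x|\ge 8r_0$ and take $R>\tfrac32|x|$, so that $B_{|x|/2}(x)\subset\{4r_0<|y|<R\}$ and $|y|<\tfrac32|x|$ on this ball. There the induction hypothesis gives $u^p(y)\ge c_j^p|y|^{p\tau_j}\ge c_j^p(\tfrac32|x|)^{p\tau_j}$ (using $p\tau_j<0$); combining with Lemma~\ref{lm 2.3} and $\int_{B_\rho(0)}|z|^{2\alpha-N}\,dz=\tfrac{|\mathbb{S}^{N-1}|}{2\alpha}\rho^{2\alpha}$ yields
\[
u(x)\ \ge\ \mathbb{G}_{\alpha,\Omega}[\mu_R](x)\ \ge\ c_4\!\int_{B_{|x|/2}(x)}\!\frac{u^p(y)}{|x-y|^{N-2\alpha}}\,dy\ \ge\ c\,|x|^{p\tau_j}\,|x|^{2\alpha}\ =\ c\,|x|^{\tau_{j+1}} ,
\]
which is the desired bound on $\{|x|\ge 8r_0\}$.

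\emph{Step 3: the bounded annulus, and the main obstacle.} On the compact set $\{4r_0\le|x|\le 8r_0\}\subset\Omega$, Lemma~\ref{cp} (with $f\equiv0$, i.e.\ Remark~\ref{rm 2}) gives $u\ge\mathbb{G}_{\alpha,\Omega}[\nu]$ a.e.; since $\nu\not=0$ this potential is lower semicontinuous and strictly positive on $\Omega$, hence bounded below there by some $c_0>0$, and as $\tau_{j+1}\le\tau_{j_0-1}<0$ for $j\le j_0-2$ (Lemma~\ref{lm 2.1}) we get $u\ge c_0\ge c_0(4r_0)^{-\tau_{j+1}}|x|^{\tau_{j+1}}$ a.e.\ on the annulus; taking $c_{j+1}$ the smaller of this constant and the $c$ of Step~2 finishes the proof. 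I expect the only delicate point to be Step~1: one cannot simply quote Remark~\ref{rm 2}, since $\mu_R$ is built from $u^p$ rather than from the prescribed measure $\nu$ (and $u^p$ need not satisfy the global integrability hypothesis of Lemma~\ref{cp}), so the inequality $u\ge\mathbb{G}_{\alpha,\Omega}[\mu_R]$ has to be recovered by rerunning the bounded‑domain approximation on the exhaustion; once that is secured, Steps~2 and~3 are the standard Riesz‑kernel lower bound together with the Green function estimate of Lemma~\ref{lm 2.3}.
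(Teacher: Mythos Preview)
Your argument is correct and uses the same two ingredients as the paper---the comparison $u\ge\mathbb{G}_{\alpha,\Omega}[\text{piece of }u^p]$ coming from Lemma~\ref{cp}/Remark~\ref{rm 2}, and the kernel lower bound of Lemma~\ref{lm 2.3}---but the way you extract the exponent $\tau_{j+1}$ is different from the paper's. The paper does not restrict to the ball $B_{|x|/2}(x)$; instead it integrates over the full annulus $O_r=B_r(0)\setminus B_{4r_0}(0)$ and performs the scaling change of variables $y=|x|z$, which pulls out the factor $|x|^{2\alpha+p\tau_j}=|x|^{\tau_{j+1}}$ and leaves a dimensionless integral
\[
\int_{\R^N\setminus B_{4r_0/|x|}(0)}\frac{|z|^{\tau_jp}}{|e_x-z|^{N-2\alpha}}\,dz\ \ge\ \int_{\R^N\setminus B_1(0)}\frac{|z|^{\tau_jp}}{|e_N-z|^{N-2\alpha}}\,dz,
\]
valid for every $x$ with $|x|\ge 4r_0$ (here convergence at infinity uses $\tau_jp<-2\alpha$, i.e.\ $\tau_{j+1}<0$, which holds for $j\le j_0-2$). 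This yields an explicit $c_{j+1}=c_4c_j^p\int_{\R^N\setminus B_1}|z|^{\tau_jp}|e_N-z|^{2\alpha-N}\,dz$ that works on all of $\R^N\setminus B_{4r_0}(0)$ in one stroke.

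Your route---localising to $B_{|x|/2}(x)$ for $|x|\ge 8r_0$ and then handling the compact annulus $\{4r_0\le|x|\le 8r_0\}$ separately via $u\ge\mathbb{G}_{\alpha,\Omega}[\nu]>0$---is perfectly valid but costs you two things: the split into regions, and a constant $c_{j+1}$ that depends on $\nu$ through Step~3 (which is harmless for the lemma but less clean than the paper's $c_{j+1}$, which depends only on $c_j,p,N,\alpha,r_0$). Your Step~1 concern is well placed and your resolution is exactly right; note that the paper's own invocation of Lemma~\ref{cp} implicitly uses the same extension, since the source $c_j^p|\cdot|^{\tau_jp}\chi_{O_r}$ is dominated by $u^p$ (not by $\nu$) via the induction hypothesis.
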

\begin{proof} For $r>\max\{1,4r_0\}$, let $O_r=   B_r(0)\setminus B_{4r_0}(0)$ and  $v_r$ be the unique solution of
\begin{equation}\label{eq 3.1}
  \arraycolsep=1pt
\begin{array}{lll}
 \displaystyle  (-\Delta)^\alpha   v_r(x) =  c_j^p|x|^{\tau_{j}p}\chi_{O_r}(x),\quad
 \forall  x\in \Omega,\\[2mm]
 \phantom{  (-\Delta)^\alpha   }
 v_r(x)=0,\qquad \forall  x\in \R^N\setminus\Omega,\\[2mm]
 \phantom{\,    }
\displaystyle   \lim_{|x|\to\infty} v_r(x)=0,
\end{array}
\end{equation}
where $\chi_{O_r}=1$ in $O_r$ and $\chi_{O_r}=0$ in $\R^N\setminus O_r$.
By Lemma \ref{cp}, we have that for any $r>1$,
$$u\ge v_r\quad {\rm in}\quad \R^N.$$
 From (\ref{3.2}), we have that
$$G_{\alpha,\Omega}(x,y)\ge \frac{ c_4 }{|x-y| ^{N-2\alpha}},\quad \forall x,\,y\in \R^N\setminus B_{4r_0}(0),\, x\not=y.$$

We observe that
\begin{eqnarray*}
v_r(x) &=&\mathbb{G}_{\alpha}[ c_j^p|\cdot|^{\tau_{j}p}\chi_{O_r}](x) \\
    &\ge &  c_4 c_j^p \int_{O_r} \frac{   |y|^{\tau_jp}}{|x-y| ^{N-2\alpha}}\, dy
    \\ &=& c_4 c_j^p |x|^{\tau_{j+1}}\int_{O_{\frac{r}{|x|}}(0)\setminus O_{\frac1{|x|}}(0)} \frac{ |z|^{\tau_jp}  }{|e_x-z|^{N-2\alpha}}\, dz
    \\&\to& c_4  c_j^p |x|^{\tau_{j+1}}\int_{\R^N\setminus B_{\frac{4r_0}{|x|}}(0)}\frac{ |z|^{\tau_jp}  }{|e_x-z|^{N-2\alpha}}\, dz\quad{\rm as}\quad r\to+\infty,
\end{eqnarray*}
where $e_x=\frac{x}{|x|}$, $\tau_jp<-2\alpha$ and for $x\in \R^N\setminus B_{4r_0}(0)$,
\begin{eqnarray*}
\int_{\R^N\setminus B_{\frac{4r_0}{|x|}}(0)}\frac{ |z|^{\tau_jp} }{|e_x-z|^{N-2\alpha}}\, dz &\ge & \int_{\R^N \setminus B_1(0)}\frac{ |z|^{\tau_jp}  }{|e_x-z| ^{N-2\alpha}}\, dz \\
   &=& \int_{\R^N \setminus B_1(0)}\frac{ |z|^{\tau_jp}  }{|e_N-z| ^{N-2\alpha}}\, dz.
\end{eqnarray*}
Let
$$c_{j+1}=c_4c_j^p  \int_{\R^N\setminus B_1(0)}\frac{ |z|^{\tau_jp} }{|e_N-z|^{N-2\alpha}} dz,$$
then
$$u(x)\ge c_{j+1}|x|^{\tau_{j+1}},\qquad\forall x\in \R^N\setminus B_{4r_0}(0).$$
The proof is complete.\end{proof}

Now we are ready to prove Theorem \ref{teo 1}$(i)$.

\medskip

\noindent{\bf Proof of Theorem \ref{teo 1} $(i)$.}
By contradiction, we assume that (\ref{eq 1.1}) has a very weak solution   $u\ge 0$.
Since $\nu\not=0$, there exists $n_1>4r_0$ such that
$$B_{n_1}(0)\cap {\rm supp}\nu\not=\emptyset,$$
then we have that
$$  \mathbb{G}_{\alpha, B_{n_1}}[\nu\chi_{B_{n_1}(0)}]>0\quad {\rm in}\quad O_{n_1},$$
and by Lemma \ref{cp}, we have that
$$u\ge \mathbb{G}_{\alpha,\Omega}[\nu ]\ge \mathbb{G}_{\alpha, B_{n_1}}[\nu\chi_{B_{n_1}(0)}]\quad {\rm in}\quad \R^N.$$
Let $\mu=\mathbb{G}_{\alpha, B_{n_1}}^p[\nu\chi_{B_{n_1}(0)}]>0$ in $B_{n_1}(0)$, then
$$u\ge \mathbb{G}_\alpha[\mu].$$

For $x\in \R^N\setminus B_{2n_1}(0)$,  we have that
\begin{eqnarray*}
\mathbb{G}_{\alpha,\Omega}[\mu](x)  \ge   c_5 \int_{B_{n_1}(0)\setminus B_{4r_0}(0)}\frac{\mu(y)dy}{|x-y|^{N-2\alpha}}
   \ge  c_6  \norm{\mu}_{L^1(B_{n_1}(0)\setminus B_{4r_0}(0) )} |x|^{2\alpha-N}.
\end{eqnarray*}
Thus, there exists $c_0>0$ such that
\begin{equation}\label{3.1-1}
 u(x)\ge c_0|x|^{\tau_0},\qquad \forall\,x\in \R^N\setminus B_{4r_0}(0)
\end{equation}
with $\tau_0=2\alpha-N<0$.
Then it implies by Lemma \ref{lm 2.2}  that for any $j\le j_0-1$,
\begin{equation}\label{2.4-1}
 u(x)\ge c_j|x|^{\tau_j},\qquad x\in \R^N\setminus B_{2r_0}(0),
\end{equation}
where $\{\tau_j\}_j$ is given   by (\ref{2.1}) and $c_j>0$.

Let $v_r$ the solution of (\ref{eq 3.1}) with $j=j_0-1$, we have that for any $r>8r_0$,
$$u(x)\ge v_r(x),\qquad \forall x\in\R^N.$$
Then for any $x\in B_{8r_0}(0)\setminus B_{4r_0}(0)$, $y\in B_{r}(0)\setminus B_{8r_0}(0) $, we have that $|x-y|\le 2|y|$  and
\begin{eqnarray*}
u(x) &\ge &c_3c_{j_0-1}^p \int_{B_r(0)\setminus B_{4r_0}(0)}\frac{|y|^{\tau_{j_0-1}p}  }{|x-y|^{N-2\alpha}}\,dy  \\
    &\ge & c_7  \int_{B_r(0)\setminus B_{4r_0}(0)} |y|^{2\alpha-N+\tau_{j_0-1}p}\,  dy
   \\ &\ge &\arraycolsep=1pt
\left\{\begin{array}{lll}
c_{8}[r^{\tau_{j_0}}-(4r_0)^{\tau_{j_0}}]\quad
  &{\rm if}\quad  \tau_{j_0}>0\\[2mm]
 \phantom{    }
\displaystyle   c_8[\log r -\log (4r_0)]\quad &{\rm if}\quad  \tau_{j_0}=0
\end{array}
\right.
\\&\to&\infty\qquad{\rm as}\quad r\to+\infty,
\end{eqnarray*}
which contradicts that $u\in L^1(\R^N,\frac{dx}{1+|x|^{N+2\alpha}})$ from the definition of very weak solution of (\ref{eq 1.1}).
  The proof ends.\hfill$\Box$

\medskip

\subsection{Half space}

We first recall the Green's estimate of the fractional Laplacian in half type space .

\begin{lemma}\label{lm 2.4}
Assume that  $N>\alpha$ and $\Omega\supset \R^N_+$,
then there exists $c_9>0$  such that
 \begin{equation}\label{eq 3.3}
 G_{\alpha,\Omega}(x,y)\ge \frac{c_9}{|x-y|^{N-2\alpha}}\min\left\{1,\left(\frac{x_Ny_N }{|x-y|^2}\right)^{\alpha}\right\},\quad\forall x,\,y\in \R^N_+.
 \end{equation}

\end{lemma}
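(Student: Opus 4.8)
The plan is to first reduce \eqref{eq 3.3} to the model case $\Omega=\R^N_+$ and then to read the bound off from the known structure of the half-space Green kernel. Since $\Omega\supseteq\R^N_+$ and both are $C^2$ domains, Corollary \ref{cr 3} applied with $\mathcal O_1=\R^N_+$ and $\mathcal O_2=\Omega$ gives
$$G_{\alpha,\Omega}(x,y)\ \ge\ G_{\alpha,\R^N_+}(x,y),\qquad \forall\, x,y\in\R^N,\ x\neq y,$$
so it suffices to establish \eqref{eq 3.3} with $\Omega$ replaced by $\R^N_+$.

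For the half space one has the classical representation formula (see \cite{CS,CT})
$$G_{\alpha,\R^N_+}(x,y)=\frac{c_{N,\alpha}}{|x-y|^{N-2\alpha}}\,I\!\left(r(x,y)\right),\qquad I(r):=\int_0^{r}\frac{s^{\alpha-1}}{(1+s)^{N/2}}\,ds,$$
for $x,y\in\R^N_+$, $x\neq y$, where $r(x,y)$ is comparable to $\frac{x_Ny_N}{|x-y|^2}$ (the exact normalization depends on the convention and only affects the constant below). I would then estimate the increasing function $I$ from below by splitting into two regimes. If $0<r\le1$, then $(1+s)^{-N/2}\ge 2^{-N/2}$ for $s\in(0,r)$, hence $I(r)\ge 2^{-N/2}\int_0^r s^{\alpha-1}\,ds=\frac{2^{-N/2}}{\alpha}\,r^{\alpha}$. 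If $r\ge1$, then $I(r)\ge I(1)>0$. Consequently $I(r)\ge c\,\min\{1,r^{\alpha}\}$ for some $c=c(N,\alpha)>0$, and reinserting the prefactor $|x-y|^{2\alpha-N}$ yields \eqref{eq 3.3} with $c_9$ a suitable multiple of $c_{N,\alpha}c$.

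If one prefers not to quote the closed form, the estimate can instead be obtained by monotone exhaustion: the balls $B_R(Re_N)$, $R>0$, lie in $\R^N_+$, increase to $\R^N_+$, and---by the monotonicity of Corollary \ref{cr 3} together with the standard fact that the Green function of an exhausting sequence of domains converges increasingly to the Green function of the union---$G_{\alpha,B_R(Re_N)}(x,y)\uparrow G_{\alpha,\R^N_+}(x,y)$ as $R\to+\infty$; then one applies the sharp two-sided Green estimate on a ball (which, by scaling and translation invariance, holds with constants independent of $R$) and lets $R\to+\infty$, using $d_{B_R(Re_N)}(x)\to x_N$, to recover \eqref{eq 3.3}. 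The only point requiring real care---and the main obstacle---is simply having the half-space Green kernel, either in explicit form or via a sharp lower bound, at one's disposal; once that input is secured, both routes reduce to the elementary two-regime splitting of the integral $I$, respectively to passing to the limit in the ball estimate, and no further difficulty arises.
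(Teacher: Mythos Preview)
Your proposal is correct and follows essentially the same route as the paper: reduce to $\Omega=\R^N_+$ via Corollary~\ref{cr 3}, then use the sharp lower bound for $G_{\alpha,\R^N_+}$. The only difference is that the paper simply quotes this bound from \cite[Corollary~1.4]{CT}, whereas you re-derive it from the explicit integral representation of the half-space Green kernel (and offer an alternative exhaustion-by-balls argument).
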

\begin{proof}  From Corollary \ref{cr 3}, we have that
$$G_{\alpha,\Omega}(x,y)\ge G_{\alpha,\R^N_+}(x,y),\quad x,\,y \in \R^N,\ x\not=y.$$
From \cite[Corollary 1.4]{CT}, there exists $c_{10}>1$ such that for any $x,y\in \R^N_+$, $x\not=y$,
\begin{equation}\label{e 3.3}
 \frac1{c_{10}}\min\left\{1,\left(\frac{x_Ny_N }{|x-y|^2}\right)^{\alpha}\right\} \le  G_{\alpha,\R^N_+}(x,y)|x-y|^{N-2\alpha} \le c_{10} \min\left\{1,\left(\frac{x_Ny_N }{|x-y|^2}\right)^{\alpha}\right\},
 \end{equation}
which implies (\ref{eq 3.3}).
The proof is complete.
\end{proof}

Let $\mathcal{C}_r$ be the cone
$$\mathcal{C}_r=\bigcup_{t>r} \left\{(x',t)\in\R^{N-1}\times\R: \,   |x'|<t \right\}. $$

\begin{lemma}\label{lm 2.2}
Let   $\{\tau_j\}_j$ be given by (\ref{2.1}) with $\tau_0= \alpha-N$, $\nu$ be a positive measure and  $u$ be a nonnegative
solution of (\ref{eq 1.1}) satisfying
$$u(x)\ge c_j|x|^{\tau_{j}},\quad \forall x\in \mathcal{C}_1$$
for some $c_j>0$ and $j\le j_0-2$.
Then for $p\in (0,\frac{N+\alpha}{N-\alpha})$, there exists $c_{j+1}>0$   such that
$$u(x)\ge c_{j+1} |x|^{\tau_{j+1}},\quad \forall x\in \mathcal{C}_1.$$

\end{lemma}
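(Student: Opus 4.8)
The strategy mirrors that of Lemma \ref{lm 3.1}, with the exterior-domain Green's estimate (\ref{3.2}) replaced by the half-space estimate (\ref{eq 3.3}) from Lemma \ref{lm 2.4}, and with the ball $O_r$ replaced by a suitable truncation of the cone $\mathcal{C}_1$. First I would, for $r>1$, let $v_r$ be the unique solution of
\begin{equation*}
  \arraycolsep=1pt
\begin{array}{lll}
 \displaystyle  (-\Delta)^\alpha   v_r(x) =  c_j^p|x|^{\tau_{j}p}\chi_{\mathcal{C}_1\cap B_r(0)}(x),\quad
 & \forall\, x\in \Omega,\\[2mm]
 \phantom{  (-\Delta)^\alpha   }
 v_r(x)=0,\quad & \forall\, x\in \R^N\setminus\Omega,\\[2mm]
 \phantom{\,    }
\displaystyle   \lim_{|x|\to\infty} v_r(x)=0.
\end{array}
\end{equation*}
By Lemma \ref{cp} (applied with $\nu$ replaced by the absolutely continuous measure $c_j^p|x|^{\tau_jp}\chi_{\mathcal{C}_1\cap B_r(0)}\,dx\le u^p$, legitimate since $u\ge c_j|x|^{\tau_j}$ on $\mathcal{C}_1$), we get $u\ge v_r$ in $\R^N$ for every $r$.

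Next I would write $v_r(x)=\int_{\Omega}G_{\alpha,\Omega}(x,y)c_j^p|y|^{\tau_jp}\chi_{\mathcal{C}_1\cap B_r(0)}(y)\,dy$ and insert the lower bound (\ref{eq 3.3}). For $x$ deep inside the cone $\mathcal{C}_1$ one has $x_N\simeq|x|$, and restricting the $y$-integration to a conical region where $y_N\simeq|y|$ and $|x-y|\simeq|x|+|y|$, the $\min\{1,(\cdot)^\alpha\}$ factor is harmless up to a dimensional constant. After the substitution $y=|x|z$ this yields, as in Lemma \ref{lm 3.1},
$$
v_r(x)\ \ge\ c\, c_j^p\,|x|^{2\alpha+\tau_jp}\int_{(\mathcal{C}_{1/|x|}\cap B_{r/|x|})\,}\frac{|z|^{\tau_jp}}{|e_x-z|^{N-2\alpha}}\min\!\Big\{1,\Big(\tfrac{(e_x)_N z_N}{|e_x-z|^2}\Big)^\alpha\Big\}dz,
$$
and letting $r\to+\infty$ the integral converges to a strictly positive constant depending only on $N,\alpha,\tau_j p$ (here the exponent condition is used: $\tau_j<0$ forces the integrand integrable near $0$, and one checks the extra $|y|^{-\alpha}$ loss from the $\min$-factor near the boundary of the cone is still integrable, which is exactly where $p<\frac{N+\alpha}{N-\alpha}$ — equivalently the lemma's hypothesis $j\le j_0-2$, so that $\tau_0+p\tau_{j}<-N$ via the remark following Lemma \ref{lm 2.1} — keeps $\tau_jp+2\alpha>-N$, i.e. $\tau_{j+1}>-N$). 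Since $2\alpha+\tau_jp=\tau_{j+1}$ by (\ref{2.1}), this gives $u(x)\ge c_{j+1}|x|^{\tau_{j+1}}$ for $x$ in the narrower cone; a final routine covering/monotonicity argument (shrinking to $\mathcal{C}_1$ and absorbing constants, exactly as the passage from $B_1(0)$ to $B_{4r_0}(0)$ is handled in Lemma \ref{lm 3.1}) upgrades the bound to hold on all of $\mathcal{C}_1$.

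The main obstacle, compared with the exterior case, is the anisotropic factor $\min\{1,(x_Ny_N/|x-y|^2)^\alpha\}$ in (\ref{eq 3.3}): one cannot simply integrate over a full annulus as in Lemma \ref{lm 3.1}, but must localize to a cone where both $x_N\gtrsim|x|$ and $y_N\gtrsim|y|$ so that this factor is bounded below by a constant, and then verify that the resulting scaled integral is finite — it is this finiteness, precisely at the boundary scale $|z|\simeq 1/|x|\to 0$ and near $\partial\mathcal{C}$, that pins down the sharp exponent $\frac{N+\alpha}{N-\alpha}$. Everything else (the comparison $u\ge v_r$, the scaling, passing $r\to\infty$) is parallel to the proof of Lemma \ref{lm 3.1}.
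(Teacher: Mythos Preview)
Your strategy is exactly the paper's: define the same $v_r$, compare via Lemma \ref{cp}, insert the half-space Green estimate (\ref{eq 3.3}), rescale $y=|x|z$, and let $r\to+\infty$. However, a couple of points in your execution are off and should be cleaned up.

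First, the paper does \emph{not} worry about integrability of the scaled integral near $z=0$. After rescaling, the domain is $\mathcal{C}_{1/|x|}$; since we only need a lower bound uniform in $x\in\mathcal{C}_1$, the paper simply restricts further to the fixed domain $\mathcal{C}_1\subset\mathcal{C}_{1/|x|}$, which is bounded away from the origin. So there is no ``near $0$'' issue at all, and your sentence ``$\tau_j<0$ forces the integrand integrable near $0$'' is both unnecessary and backwards (that sign makes $|z|^{\tau_jp}$ blow up, not decay).

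Second, the anisotropic factor is handled by a direct pointwise bound, not by localizing to a sub-cone and then covering. For $x\in\mathcal{C}_1$ one has $(e_x)_N\ge 1/\sqrt{2}$, and for $z\in\mathcal{C}_1$ one uses $|e_x-z|\le 1+|z|$ to get
\[
\min\Big\{1,\Big(\tfrac{(e_x)_N z_N}{|e_x-z|^2}\Big)^{\alpha}\Big\}\ \ge\ \frac{z_N^\alpha}{(1+|z|)^{2\alpha}},
\]
which is independent of $e_x$. This yields directly
\[
c_{j+1}=c_9\,c_j^p\int_{\mathcal{C}_1}\frac{|z|^{\tau_jp}}{(1+|z|)^{N-2\alpha}}\,\frac{z_N^\alpha}{(1+|z|)^{2\alpha}}\,dz,
\]
and the only convergence to check is at infinity, where the integrand behaves like $|z|^{\tau_jp+\alpha-N}$; this is integrable precisely because $\tau_jp<-2\alpha$ (equivalently $\tau_{j+1}<0$, guaranteed by $j\le j_0-2$). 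No covering or ``shrinking to $\mathcal{C}_1$'' step is needed, and there is no analogous passage in Lemma \ref{lm 3.1} either.
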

\begin{proof} For $r>1$, let $O_r=\mathcal{C}_1\cap  B_r(0)$ and  $v_r$ be the unique solution of
\begin{equation}\label{eq 2.1}
  \arraycolsep=1pt
\begin{array}{lll}
 \displaystyle  (-\Delta)^\alpha   v_r(x) =  c_j^p|x|^{\tau_{j}p}\chi_{O_r}(x),\quad
 \forall\,  x\in \Omega,\\[2mm]
 \phantom{  (-\Delta)^\alpha   }
 v_r(x)=0,\qquad\ \forall\, x\in \R^N\setminus\Omega,\\[2mm]
 \phantom{\,    }
\displaystyle   \lim_{|x|\to+\infty} v_r(x)=0,
\end{array}
\end{equation}
where $\chi_{O_r}=1$ in $O_r$ and $\chi_{O_r}=0$ in $\R^N\setminus O_r$.
By Lemma \ref{cp}, we have that for any $r>1$,
$$u\ge v_r\quad {\rm in}\quad \R^N.$$
 From (\ref{eq 3.3}), we have that
\begin{eqnarray*}
v_r(x) &=&\mathbb{G}_{\alpha}[ c_j^p|\cdot|^{\tau_{j}p}\chi_{O_r}](x) \\
    &\ge &  c_9 c_j^p \int_{O_r} |y|^{\tau_jp}   \frac{1}{|x-y|^{N-2\alpha}}\min\left\{1,\left(\frac{x_Ny_N }{|x-y|^2}\right)^{\alpha}\right\} \,dy
    \\ &=& c_9 c_j^p |x|^{\tau_{j+1}}\int_{O_{\frac{r}{|x|}}(0)\setminus O_{\frac1{|x|}}(0)} \frac{ |z|^{\tau_jp} }{|e_x-z|^{N-2\alpha}}\min\left\{1,\left(\frac{ z_N }{|e_x-z|^2}\right)^{\alpha}\right\} \,dz
    \\&\to& c_9  c_j^p |x|^{\tau_{j+1}}\int_{\mathcal{C}_1\setminus B_{\frac1{|x|}}(0)}
    \frac{ |z|^{\tau_jp} }{|e_x-z|^{N-2\alpha}}\min\left\{1,\left(\frac{ z_N }{|e_x-z|^2}\right)^{\alpha}\right\}\, dz\quad{\rm as}\quad r\to+\infty,
\end{eqnarray*}
where $e_x=\frac{x}{|x|}$, $\tau_jp<-2\alpha$ and for any $x\in \mathcal{C}_1$,
\begin{eqnarray*}
 &&\int_{\mathcal{C}_1\setminus B_{\frac1{|x|}}(0)}\frac{ |z|^{\tau_jp} }{|e_x-z|^{N-2\alpha}}\min\left\{1,\left(\frac{ z_N }{|e_x-z|^2}\right)^{\alpha}\right\}\, dz
  \\&\ge& \int_{\mathcal{C}_1}\frac{ |z|^{\tau_jp} }{|e_x-z|^{N-2\alpha}}\min\left\{1,\left(\frac{ z_N }{|e_x-z|^2}\right)^{\alpha}\right\}\, dz
  \\&\ge&\int_{\mathcal{C}_1}\frac{ |z|^{\tau_jp} }{(1+|z|)^{N-2\alpha}} \frac{ z_N^\alpha }{(1+|z|)^{2\alpha} }\, dz.
\end{eqnarray*}

Let
$$c_{j+1}=c_9c_j^p  \int_{\mathcal{C}_1}\frac{ |z|^{\tau_jp} }{(1+|z|)^{N-2\alpha}} \frac{ z_N^\alpha }{(1+|z|)^{2\alpha} }\, dz,$$
then
$$u(x)\ge c_{j+1}|x|^{\tau_{j+1}},\qquad\forall x\in \mathcal{C}_1.$$
The proof is complete.\end{proof}

Now we are ready to prove Theorem \ref{teo 1}$(ii)$.

\medskip

\noindent{\bf Proof of Theorem \ref{teo 1} $(ii)$.}
By contradiction, we assume that (\ref{eq 1.1}) has a very weak  solution $u\ge 0$.

We first claim that there exists $c_0>0$ such that
\begin{equation}\label{3.1}
 u(x)\ge c_0|x|^{\alpha-N},\qquad\forall x\in \mathcal{C}_1.
\end{equation}
Indeed, let $\{O_n\}_n$ be a sequence of $C^2$ domain such that
$$\Omega\cap B_n(0)\subset O_n\subset \Omega\cap B_{n+1}(0).$$
 Let $n_2>1$ such that
$$B_{n_2}(0)\cap {\rm supp}\nu\not=\emptyset,$$
then we have that
$$  \mathbb{G}_{\alpha, O_{n_2}}[\nu\chi_{B_{n_2}(0)}]>0\quad {\rm in}\quad O_{n_2}$$
and
$$u\ge \mathbb{G}_{\alpha,\Omega}[\nu\chi_{B_{n_2}(0)} ]\ge \mathbb{G}_{\alpha, O_{n_2}}[\nu\chi_{B_{n_2}(0)}]\quad {\rm in}\quad \R^N.$$

Let $\mu=\mathbb{G}_{\alpha, O_{n_2}}^p[\nu\chi_{B_{n_2}(0)}]$ and then
$$u\ge \mathbb{G}_\alpha[\mu].$$

For $x\in \mathcal{C}_1$ and $y\in O_{n_1}\cap \{z\in\R^N:\, z_N>1\}$, we have that  $|x-y|\le |x|+|y|<(n_2+1)|x|$ and $x_N>\frac{\sqrt{2}}{2}|x|$
\begin{eqnarray*}
\min\left\{1,\left(\frac{ x_Ny_N }{|x-y|^2}\right)^{\alpha}\right\}   &\ge&   \min\left\{1,\left(\frac{ x_N  }{(n_2+1)^2|x|^2}\right)^{\alpha}\right\}
  \\& \ge &     \frac{c_{11}}{1+|x|^\alpha},
\end{eqnarray*}
  and
\begin{eqnarray*}
\mathbb{G}_{\alpha,\Omega}[\mu](x)  &\ge&   c_{11} \int_{O_{n_2} }\frac{\mu(y)dy}{|x-y|^{N-2\alpha}}\min\left\{1,\left(\frac{ x_Ny_N }{|x-y|^2}\right)^{\alpha}\right\} dy
  \\& \ge & c_{11}  \norm{\mu}_{L^1(O_{n_2} )} |x|^{\alpha-N},
\end{eqnarray*}
where $c_{11}>0$ depends on $n_2$.

From (\ref{3.1}),  we have that
$$u(x)\ge c_0|x|^{\tau_0},\qquad\forall\, x\in \mathcal{C}_1$$
with $\tau_0=\alpha-N<0$.
Then it implies by Lemma \ref{lm 2.2}  that for any $j\le j_0-1$,
\begin{equation}\label{2.4}
 u(x)\ge c_j|x|^{\tau_j},\qquad\forall\, x\in \mathcal{C}_1,
\end{equation}
where $\{\tau_j\}_j$ is given by (\ref{2.1}) and $c_j>0$.

Let $v_r$ the solution of (\ref{eq 2.1}) with $j=j_0-1$, we have that for any $r>1$,
$$u(x)\ge v_r(x),\qquad \forall x\in\R^N.$$
Then for any $x\in \mathcal{C}_1\setminus B_r(0)$ with $r>1$, $y\in \mathcal{C}_1\setminus B_{2|x|}(0)$, $|x-y|\le 2|y|$,   we have that
\begin{eqnarray*}
u(x) &\ge &c_9c_{j_0-1}^p \int_{O_r(0)\setminus B_1(0)}|y|^{\tau_{j_0-1}p}   \frac{1}{|x-y|^{N-2\alpha}}\min\left\{1,\left(\frac{x_Ny_N }{|x-y|^2}\right)^{\alpha}\right\} \, dy  \\
    &\ge & c_{11} x_N^\alpha \int_{O_r(0)\setminus B_{2|x|}(0)} |y|^{-N+\tau_{j_0-1}p}y_N^\alpha \, dy
   \\ &\ge &\arraycolsep=1pt
\left\{\begin{array}{lll}
c_{12}[r^{\tau_{j_0}}-(2|x|)^{\tau_{j_0}}]\quad
  &{\rm if}\quad  \tau_{j_0}>0\\[2mm]
 \phantom{    }
\displaystyle   c_{12}[\log r -\log (2|x|)]\quad &{\rm if}\quad  \tau_{j_0}=0
\end{array}
\right.
\\&\to&\infty\qquad{\rm as}\quad r\to+\infty,
\end{eqnarray*}
which contradicts that $u\in L^1(\R^N,\frac{dx}{1+|x|^{N+2\alpha}})$ from the definition of very weak solution of (\ref{eq 1.1}).
  The proof ends.\hfill$\Box$

\medskip

\subsection{Nonexistence in the classical setting}

In this subsection, we prove the nonexistence of classical solutions of semi-linear elliptic equations (\ref{eq 1.2}) and (\ref{eq 1.3}) by using the method
 in the proof of Theorem \ref{teo 1}. The main difference is that we use strong Comparison Principle replacing the weak one.

\smallskip

\noindent{ \bf Proof of Corollary \ref{cr 1}.}
Since $u\ge0$ is a classical solution of (\ref{eq 1.2}), then
if there exists one point $x_0\in\R^N\setminus B_{r_0}(0)$ such that $u(x_0)=0$, then
we have that
$$\int_{\R^N}\frac{u(y)}{|x_0-y|^{N+2\alpha}}dy=0,$$
which implies that
$$u\equiv0.$$
 So we may assume that $u>0$ in $\Omega$ and   let  $u_l(x)=u(l^{-1}x)$ for  $x\in\R^N$, $l>1$, then
$u_l$ is a positive solution of
$$
  \arraycolsep=1pt
\begin{array}{lll}
 \displaystyle   (-\Delta)^\alpha   u_l = l^{2\alpha} u_l^p\quad
 & {\rm in}\quad  \R^{N}\setminus B_{lr_0}(0),\\[2mm]
 \phantom{  (-\Delta)^\alpha }
\displaystyle    u_l \ge 0\quad
 & {\rm in}\quad   B_{lr_0}(0).
\end{array}
$$
We see that the positive function $w_l:=\mathbb{G}_{\alpha,\R^N\setminus \overline{B_{2lr_0}(0)}}[(l^{2\alpha}-1)u_l^p \chi_{B_{4lr_0}(0)\setminus B_{2lr_0}(0)}]$ is a classical solution of
  \begin{equation}\label{eq 5.1}
  \arraycolsep=1pt
\begin{array}{lll}
 \displaystyle   (-\Delta)^\alpha   w_l = l^{2\alpha} u_l^p\quad
 & {\rm in}\quad  \R^{N}\setminus \overline{B_{2lr_0}(0)},\\[2mm]
 \phantom{  (-\Delta)^\alpha }
\displaystyle    w_l =0 \quad
 & {\rm in}\quad   \overline{B_{2lr_0}(0)},
 \\[2mm]
 \phantom{   }
\displaystyle  \lim_{|x|\to+\infty} w_l(x)=0.
\end{array}
  \end{equation}
 The remaind of the  proof is similar to the proof of Theorem 1.1 $(i)$ just replacing the weak Comparison Principle by strong Comparison Principle, so we just sketch the proof.
 By strong Comparison Principle, we have that
$$u_l\ge w_l\quad{\rm in}\quad {\R^N}.$$
By Lemma \ref{lm 2.3} with $\Omega=\R^N\setminus B_{2lr_0}(0)$,
$$w_l(x)\ge c_{13}|x|^{2\alpha-N},\quad |x|>2lr_0,$$
thus,
$$u_l\ge c_{13}|x|^{2\alpha-N},\quad |x|>2lr_0.$$
By   Lemma \ref{cp} and  repeat the argument of the proof of Theorem 1.1 $(i)$ to obtain that
$$u_l(x)=+\infty\quad {\rm for}\quad |x|\ge 2lr_0,$$
which contradicts that $u$ is a classical solution of (\ref{eq 5.1}).\hfill$\Box$

\medskip

\noindent{ \bf Proof of Corollary \ref{cr 2}.}  Since $u\ge0$ is a classical solution of (\ref{eq 1.3}), then
if there exists one point $y_0\in\Omega$ such that $u(y_0)=0$, then
we have that
$$\int_{\R^N}\frac{u(y)}{|x_0-y|^{N+2\alpha}}\, dy=0,$$
which implies that
$$u\equiv0.$$
So we may assume that $u>0$ in $\Omega$. For $l>1$, let $u_l(x)=u(l^{-1}x)$ for $x\in\R^N$, then
$u_l$ is a positive solution of
 \begin{equation}\label{eq 5.2}
  \arraycolsep=1pt
\begin{array}{lll}
 \displaystyle   (-\Delta)^\alpha   u_l = l^{2\alpha} u_l^p\quad
 & {\rm in}\quad  \R^{N-1}\times(0,+\infty),\\[2mm]
 \phantom{  (-\Delta)^\alpha }
\displaystyle    u_l \ge0\quad
 & {\rm in}\quad  \R^{N-1}\times(-\infty,0].
\end{array}
 \end{equation}
The remaind  is similar to the proof of Theorem 1.1 $(ii)$  with $p<\frac{N+\alpha}{N-\alpha}$, and we omit here.  \hfill$\Box$

\section{Existence in the supercritical case  }

To prove Theorem \ref{teo 2}, the following estimate plays an important role  in the construction of the upper bound in the procedure of finding the solution.

\begin{lemma}\label{lm 6.1}
For $p\in \left[\frac{N+\alpha}{N-\alpha}, \frac{N}{N-2\alpha}\right)$, we have that
\begin{equation}\label{5.1}
\mathbb{G}_{\alpha,\R^N_+}[\mathbb{G}_{\alpha,\R^N_+}^p[\delta_{e_N}]]\le c_{14}\mathbb{G}_{\alpha,\R^N_+}[\delta_{e_N}]\quad{\rm in}\quad\R^N_+.
\end{equation}
\end{lemma}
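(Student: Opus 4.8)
The plan is to obtain explicit two-sided bounds on the two Green potentials appearing in \eqref{5.1} and then compare them directly. First I would compute the behaviour of $\mathbb{G}_{\alpha,\R^N_+}[\delta_{e_N}](x) = G_{\alpha,\R^N_+}(x,e_N)$. By the estimate \eqref{e 3.3} from \cite[Corollary 1.4]{CT}, one has
\[
G_{\alpha,\R^N_+}(x,e_N)\asymp \frac{1}{|x-e_N|^{N-2\alpha}}\min\left\{1,\left(\frac{x_N}{|x-e_N|^2}\right)^{\alpha}\right\},
\]
which for $|x|$ large behaves like $x_N^{\alpha}|x|^{-N}$ and near $e_N$ like $|x-e_N|^{2\alpha-N}$. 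So the first task is to record: there exist $c,C>0$ with $c\,\phi(x)\le \mathbb{G}_{\alpha,\R^N_+}[\delta_{e_N}](x)\le C\,\phi(x)$, where $\phi(x)=|x-e_N|^{2\alpha-N}\min\{1,(x_N|x-e_N|^{-2})^{\alpha}\}$.

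Next I would estimate $\mathbb{G}_{\alpha,\R^N_+}[\delta_{e_N}]^p$, i.e. $\phi^p$, and then its potential $\mathbb{G}_{\alpha,\R^N_+}[\phi^p](x) \asymp \int_{\R^N_+} G_{\alpha,\R^N_+}(x,y)\,\phi(y)^p\,dy$. The integrability condition $p<\frac{N}{N-2\alpha}$ guarantees $\phi^p$ is locally integrable (the singularity $|y-e_N|^{(2\alpha-N)p}$ is integrable), and the condition $p\ge\frac{N+\alpha}{N-\alpha}$ is exactly what makes the contribution of the region $|y|$ large summable against the boundary-decay factor $y_N^{\alpha p}|y|^{-Np}$ so that the potential decays no slower than $\phi$ itself. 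Concretely I would split the integral over $y$ into: (a) a bounded region away from $e_N$ and the boundary, (b) a neighbourhood of the pole $e_N$, and (c) the region at infinity. In (a) the integral is a bounded quantity times a Green function which is itself $\le C\phi(x)$ up to constants after using the $3G$-type / Harnack comparison; in (b) the local integrability of $\phi^p$ together with the pointwise Green bound gives a contribution $\lesssim |x-e_N|^{2\alpha-N}\min\{\cdot\}\lesssim \phi(x)$; in (c) one uses $\min\{1,(x_Ny_N/|x-y|^2)^\alpha\}\le (x_N/|x-y|)^{\alpha}(y_N/|x-y|)^{\alpha}$ and the decay $\phi(y)^p\lesssim y_N^{\alpha p}(1+|y|)^{-Np}$, so the $y$-integral converges precisely because $Np-\alpha p>N$, i.e. $p>\frac{N}{N-\alpha}$, which is implied by $p\ge\frac{N+\alpha}{N-\alpha}$, and what is left is again bounded by $C\,\phi(x)$. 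Summing the three pieces yields $\mathbb{G}_{\alpha,\R^N_+}[\phi^p](x)\le c_{14}\,\phi(x)\le c_{14}'\,\mathbb{G}_{\alpha,\R^N_+}[\delta_{e_N}](x)$, which is \eqref{5.1}.

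The main obstacle is the bookkeeping in step (c) and at the interface between the three regions: one must check uniformly in $x\in\R^N_+$ (both $x$ near $e_N$, $x$ near the boundary, and $x$ at infinity) that each piece is controlled by the single profile $\phi(x)$ with the correct power of $x_N$ and $|x|$, and that no logarithmic factor appears at the borderline exponent $p=\frac{N+\alpha}{N-\alpha}$. This is where the precise two-sided Green estimate \eqref{e 3.3} and a careful use of the elementary inequality $\min\{1,(ab)^{\alpha}\}\le a^{\alpha}b^{\alpha}$ together with the convergence of $\int_{\R^N_+} y_N^{\alpha p}(1+|y|)^{-Np+?}\,dy$ must be combined; once the exponents are matched the comparison is immediate. (One can alternatively phrase the whole argument through the known sharp pointwise bounds for the "Green potential of a point-mass power" in a Lipschitz cone, but the direct splitting above is self-contained given \eqref{e 3.3} and Corollary \ref{cr 3}.)
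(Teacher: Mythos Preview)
Your plan is essentially the paper's proof: reduce to the explicit profile $\phi(x)=|x-e_N|^{2\alpha-N}\min\{1,(x_N|x-e_N|^{-2})^{\alpha}\}$ via \eqref{e 3.3}, then estimate $\mathbb{G}_{\alpha,\R^N_+}[\phi^p]$ by a region-by-region comparison with $\phi$. The only organisational difference is that the paper partitions by the position of $x$ --- into $D_1=B_{1/2}(e_N)$, $D_2=\{x_N<\tfrac14\}$, $D_3=\{x_N\ge\tfrac14,\ |x|>8\}$, and a compact remainder $D_4$ --- and then, within each case, splits the $y$-integral as needed, whereas you propose to split the $y$-integral first and verify uniformity in $x$ afterwards. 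Either ordering leads to the same computations; the paper's ordering is a little cleaner because the target shape $\phi(x)$ changes across the three $x$-regimes (singularity at $e_N$, boundary weight $x_N^\alpha$, decay $|x|^{\alpha-N}$), so fixing $x$ first tells you immediately which power you must recover.

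One point worth sharpening: the lower bound $p\ge\frac{N+\alpha}{N-\alpha}$ is not what gives \emph{convergence} of the far-field $y$-integral --- that already follows from the weaker inequality $(N-\alpha)(p+1)>N$, and your condition $Np-\alpha p>N$ is stronger than necessary there. Its precise role (the paper's Case~3, $x\in D_3$) is to match the \emph{decay rate} of the potential to $\phi$: one obtains $\mathbb{G}_{\alpha,\R^N_+}[\phi^p](x)\lesssim|x|^{2\alpha-(N-\alpha)p}$ for $|x|$ large with $x_N\gtrsim 1$, and comparing with $\phi(x)\asymp|x|^{\alpha-N}$ forces $2\alpha-(N-\alpha)p\le\alpha-N$, i.e.\ $p\ge\frac{N+\alpha}{N-\alpha}$. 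At the endpoint the exponents coincide and no logarithmic correction appears, as you correctly anticipated.
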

\begin{proof}
From (\ref{e 3.3}), we have that for $x\in\R^N_+$, $x\not=e_N$,
\begin{equation}\label{e 5.3}
 \frac{1}{c_{10}|x-e_N|^{N-2\alpha}} \frac{x_N^\alpha}{1+|x|^{2\alpha}}\le  \mathbb{G}_{\alpha,\R^N_+}[\delta_{e_N}](x)  \le \frac{c_{10}}{|x-e_N|^{N-2\alpha}} \frac{x_N^\alpha}{1+|x|^{2\alpha}}.
 \end{equation}
Our aim here is to prove (\ref{5.1}) in $\R^N_+$, which is divided into  $D_1:= B_{\frac12}(e_N)$,
 $D_2= \{z\in\R^N_+:\, z_N<\frac14\}$,   $D_3:= \{z\in\R^N_+:\, z_N\ge\frac14,\, |z|>8\}$ and $D_4=\R^N_+\setminus (D_1\cap D_2\cap D_3)$.
Since  $D_4$ is compact and  $ \mathbb{G}_{\alpha,\R^N_+}[\mathbb{G}_{\alpha,\R^N_+}^p[\delta_{e_N}]]$ has no singularity and decaying, then
 (\ref{5.1}) holds in $D_4$.

{\it Case 1: $x\in D_1$. }
For $x\in D_1$, we observe that
\begin{eqnarray*}
  \mathbb{G}_{\alpha,\R^N_+}[\mathbb{G}_{\alpha,\R^N_+}^p[\delta_{e_N}]](x) &\le&  c_{15}\int_{\R^N_+}\frac{G_{\alpha,\R^N_+}(x,y)}{ |y-e_N|^{(N-2\alpha)p}(1+|y|)^{\alpha p}}\, dy,
\end{eqnarray*}
where
\begin{eqnarray*}
 && \int_{B_2(0)} \frac{G_{\alpha,\R^N_+}(x,y)}{ |y-e_N|^{(N-2\alpha)p}(1+|y|)^{\alpha p}}\, dy
 \\&\le& \int_{B_2(0)}\min\left\{1,\left(\frac{x_Ny_N }{|x-y|^2}\right)^{\alpha}\right\}\frac{c_{10}}{|x-y|^{N-2\alpha}}\frac{1}{|y-e_N|^{(N-2\alpha)p}}\,dy \\
 &\le & \int_{B_2(0)}\frac{c_{10}}{|(x-e_N)-y|^{N-2\alpha}}\frac{1}{ |y|^{(N-2\alpha)p}}\, dy \\
 &=& |x-e_N|^{2\alpha-(N-2\alpha)p} \int_{B_{\frac2{|x-e_N|}}(0)}\frac{c_{10}}{|e-z|^{N-2\alpha}}\frac{1}{ |z|^{(N-2\alpha)p}} \,dz
 \\&\le & |x-e_N|^{2\alpha-(N-2\alpha)p} \left[ \int_{B_1(e)} \frac{c_{10}}{|e-z|^{N-2\alpha}}dz+\int_{B_1(0)} \frac{c_{10}}{|z|^{(N-2\alpha)p}}dz \right. \\&& \left.+\int_{B_{\frac2{|x-e_N|}(0)}} \frac{c_{10}}{1+|z|^{(N-2\alpha)(p+1)}} dz\right]
 \\& \le& c_{16} |x-e_N|^{2\alpha-(N-2\alpha)p}\left(1+|x-e_N|^{(N-2\alpha)p-2\alpha} \right)
 \\& =& c_{16}|x-e_N|^{2\alpha-(N-2\alpha)p}+c_{15}
\end{eqnarray*}
and
\begin{eqnarray*}
 && \int_{\R^N\setminus  B_2(0)} \frac{G_{\alpha,\R^N_+}(x,y)}{ |y-e_N|^{(N-2\alpha)p}(1+|y|)^{\alpha p}}\, dy
 \\&\le& \int_{\R^N\setminus B_2(0)}\min\left\{1,\left(\frac{x_Ny_N }{|x-y|^2}\right)^{\alpha}\right\}\frac{c_{10}}{|x-y|^{N-2\alpha}}\frac{1}{|y|^{(N-\alpha)p}}\, dy \\
 &\le &    \int_{\R^N}\frac{c_{10}}{1+|y|^{(N-\alpha)(p+1)}}\, dy\le c_{17},
\end{eqnarray*}
here $e_N=\frac{x-e_N}{|x-e_N|}$ and $(N-\alpha)(p+1)>N$.
Therefore, (\ref{5.1}) holds for $x\in D_1$.

{\it Case 2: $x\in D_2$. } We note that
\begin{eqnarray*}
  \mathbb{G}_{\alpha,\R^N_+}[\mathbb{G}_{\alpha,\R^N_+}^p[\delta_{e_N}]](x) &\le&  c_{17}\int_{\R^N_+}\frac{G_{\alpha,\R^N_+}(x,y)y_N^\alpha}{ |y-e_N|^{(N-2\alpha)p}(1+|y|)^{2\alpha p}} dy.
\end{eqnarray*}
For $x\in D_2$ satisfying $|x|\ge \frac12$, let $D_{x}=\{z\in\R^N_+:\, z_N>2x_N\}$, then we have that
\begin{eqnarray*}
 && \int_{D_x} \frac{G_{\alpha,\R^N_+}(x,y)}{ |y-e_N|^{(N-2\alpha)p}(1+|y|)^{\alpha p}}\, dy
 \\&\le& x_N^\alpha \int_{D_{x} }\frac{y_N^\alpha  }{1+|x-y|^{2\alpha}} \frac{c_{10}}{|x-y|^{N-2\alpha}}\frac{1}{|y-e_N|^{(N-\alpha)p}}\,dy \\
 &\le & x_N^\alpha |x|^{(\alpha-N)(p+1)}\int_{\R^N_+}\frac{c_{10}}{|e_x-z|^{N-\alpha}}\frac{1}{|z-\frac{e_N}{|x|}|^{(N-\alpha)p}}\, dy \\
 &\le & c_{18} x_N^\alpha |x|^{(\alpha-N)(p+1)}
\end{eqnarray*}
and
\begin{eqnarray*}
  \int_{\R^N_+\setminus  D_{x}} \frac{G_{\alpha,\R^N_+}(x,y)}{ |y-e_N|^{(N-2\alpha)p}(1+|y|)^{\alpha p}}\, dy
 &\le& \int_{\R^N\setminus D_x(0)} \frac{x_N^\alpha y_N^\alpha}{1+|x-y|^{2\alpha}} \frac{c_{10}}{|x-y|^{N-2\alpha}} \frac{1}{1+|y|^{(N-\alpha)p}}\,dy \\
 &\le & c_{19} x_N^\alpha |x|^{(\alpha-N)(p+1)},
\end{eqnarray*}
where $(\alpha-N)(p+1)<-N$.

For $x\in D_2$ satisfying $|x|< \frac12$, we have that
\begin{eqnarray*}
 && \int_{D_x} \frac{G_{\alpha,\R^N_+}(x,y)}{ |y-e_N|^{(N-2\alpha)p}(1+|y|)^{\alpha p}}\, dy
 \\&\le& x_N^\alpha \int_{D_{x} }\frac{y_N^\alpha  }{1+|x-y|^{2\alpha}} \frac{c_{10}}{|x-y|^{N-2\alpha}}\frac{1}{|y-e_N|^{(N-\alpha)p}}\, dy \\
 &\le &c_{20} x_N^\alpha
\end{eqnarray*}
and
\begin{eqnarray*}
  \int_{\R^N_+\setminus  D_{x}} \frac{G_{\alpha,\R^N_+}(x,y)}{ |y-e_N|^{(N-2\alpha)p}(1+|y|)^{\alpha p}} dy
 &\le& \int_{\R^N\setminus D_x(0)} \frac{x_N^\alpha y_N^\alpha}{1+|x-y|^{2\alpha}} \frac{1}{|x-y|^{N-2\alpha}} \frac{1}{1+|y|^{(N-\alpha)p}}dy \\
 &\le & c_{21} x_N^\alpha.
\end{eqnarray*}
 Therefore, (\ref{5.1}) holds for $x\in D_2$.

{\it Case 3: $x\in D_3$. }   We see that
\begin{eqnarray*}
 && \int_{\R^N_+} \frac{G_{\alpha,\R^N_+}(x,y)}{ |y-e_N|^{(N-2\alpha)p}(1+|y|)^{\alpha p}} dy
 \\&\le& \int_{\R^N_+} \frac{c_{10}}{|x-y|^{N-2\alpha}}\frac{1}{ |y-e_N|^{(N-2\alpha)p}(1+|y|)^{\alpha p} } dy \\
 &= & c_{22}|x|^{2\alpha-(N-\alpha)p}  \int_{\R^N_+} \frac{c_{10}}{|e_x-z|^{N-2\alpha}}\frac{1}{ |z-\frac{e_N}{|x|}|^{(N-2\alpha)p}(|x|^{-1}+|z|)^{\alpha p} } dz\\
 &\le & c_{22}|x|^{2\alpha-(N-\alpha)p}  \left[ \int_{B_{\frac12}(e_N)} \frac{c_{10}}{|e_N-z|^{N-2\alpha}}dz+\int_{B_{\frac12}(\frac{e_N}{|x|})} |z-\frac{e_N}{|x|}|^{(N-2\alpha)p} dz \right. \\&& \left.+\int_{B_{\frac2{|x-e_N|}(0)}} \frac{c_{10}}{1+|z|^{(N-\alpha)(p+1)-\alpha}} dz\right]
\\ &\le & c_{23}|x|^{2\alpha-(N-\alpha)p},
\end{eqnarray*}
where $(N-\alpha)(p+1)-\alpha>N$.
Since $2\alpha-(N-\alpha)p\le \alpha-N$, then
(\ref{5.1}) holds for $x\in D_3$.
The proof ends. \end{proof}

\medskip

\noindent {\bf Proof of Theorem \ref{teo 2}.}   We first define the iterating sequence
$$v_0= k \mathbb{G}_{\alpha,\R^N_+}[\delta_{e_N}]>0$$
and
$$
 v_n  =  \mathbb{G}_{\alpha,\R^N_+}[v_{n-1}^p]+ k \mathbb{G}_{\alpha,\R^N_+}[\delta_{e_N}].
$$
Observing that
$$v_1= \mathbb{G}_{\alpha,\R^N_+}[(kv_0)^p] + k \mathbb{G}_{\alpha,\R^N_+}[\delta_{e_N}]>v_0$$
and  assuming that
$$
v_{n-1} \ge  v_{n-2} \quad{\rm in} \quad \R^N_+\setminus\{e_N\},
$$
we deduce that
\begin{eqnarray*}
 v_n =   \mathbb{G}_{\alpha,\R^N_+}[v_{n-1}^p]+ k \mathbb{G}_{\alpha,\R^N_+}[\delta_{e_N}]
 \ge  \mathbb{G}_{\alpha,\R^N_+}[v_{n-2}^p]+ k \mathbb{G}_{\alpha,\R^N_+}[\delta_{e_N}]
 =   v_{n-1}.
\end{eqnarray*}
Then the sequence $\{v_n\}_n$ is  increasing with respect to $n$.
Moreover, we have that
\begin{equation}\label{4.2.3}
\int_{\R^N_+} v_n(-\Delta)^\alpha  \xi \,dx =\int_{\R^N_+} v_{n-1}^p\xi \,dx +k\xi(e_N), \quad \forall \xi\in C^\infty_c(\R^N_+).
\end{equation}

We next build an upper bound for the sequence $\{v_n\}_n$.  For  $t>0$, denote
\begin{equation}\label{wt}
 w_t=t k^p\mathbb{G}_{\alpha,\R^N_+}[\mathbb{G}_{\alpha,\R^N_+}^p[\delta_{e_N}]]+ k\mathbb{G}_{\alpha,\R^N_+}[\delta_{e_N}]\le (c_{14}t k^p+ k)\mathbb{G}_{\alpha,\R^N_+}[\delta_{e_N}],
\end{equation}
where $c_{14}>0$ is from Lemma \ref{lm 6.1},
 then
\begin{eqnarray*}
  \mathbb{G}_{\alpha,\R^N_+}[w_t^p]+k\mathbb{G}_{\alpha,\R^N_+}[\delta_{e_N}]\le  (c_{14}t k^p+ k)^p\mathbb{G}_{\alpha,\R^N_+}[\mathbb{G}_{\alpha,\R^N_+}^p[\delta_{e_N}]] +  k  \mathbb{G}_{\alpha,\R^N_+}[\delta_{e_N}]
   \le  w_t,
\end{eqnarray*}
if
$$
 (c_{14}t k^p+ k)^p\le tk^p,
$$
that is,
\begin{equation}\label{4.2.4}
  (c_{14}t k^{p-1} + 1)^p\le t.
\end{equation}

Let $k_p=\left(\frac1{c_{14} p}\right)^{\frac1{p-1}}\frac{p-1}p$ and $t_p=\left(\frac p{p-1}\right)^{p}$, then if $k\le k_p$ and $t=t_p$, (\ref{4.2.4}) holds.
 Hence,  by the definition of $w_{t_p}$, we have  $w_{t_p}>v_0$ and
$$v_1= \mathbb{G}_{\alpha,\R^N_+}[v_0^p]+ k \mathbb{G}_{\alpha,\R^N_+}[\delta_0]<\mathbb{G}_{\alpha,\R^N_+}[w_{t_p}^p]+ k \mathbb{G}_{\alpha,\R^N_+}[\delta_0]=w_{t_p}.$$
Inductively, we obtain that
\begin{equation}\label{2.10a}
v_n\le w_{t_p}
\end{equation}
for all $n\in\N$. Therefore, the sequence $\{v_n\}_n$ converges. Let $u_{k}:=\lim_{n\to\infty} v_n$. By \eqref{4.2.3}, we have that $u_{k}$ is a very weak solution of (\ref{eq 1.2}).

We claim that $u_{k}$ is the minimal solution of (\ref{eq 1.1}), that is, for any nonnegative solution $u$ of (\ref{eq 1.2}), we always have $u_{k}\leq u$. Indeed,  there holds
\[
 u  = \mathbb{G}_{\alpha,\R^N_+}[  u^p]+ k \mathbb{G}_{\alpha,\R^N_+}[\delta_0]\ge v_0,
\]
 then
\[
 u  = \mathbb{G}_{\alpha,\R^N_+}[ u^p]+ k \mathbb{G}_{\alpha,\R^N_+}[\delta_0]\ge \mathbb{G}_{\alpha,\R^N_+}[ v_0^p]+ k \mathbb{G}_{\alpha,\R^N_+}[\delta_0]=v_{1}.
\]
We may show inductively that
\[
u\ge v_n
\]
for all $n\in\N$.  The claim follows.

 Similarly,  if problem (\ref{eq 1.2}) has a nonnegative solution $u$  for $ k_1>0$, then (\ref{eq 1.2}) admits a minimal solution $u_{k}$ for all $ k\in(0, k_1]$. As a result, the mapping $ k\mapsto u_{k}$ is increasing.
So we may define
$$k^*=\sup\{k>0:\ (\ref{eq 1.1})\ {\rm has\ minimal\ solution\ for\ }k \}$$
and we have that
$$k^*\ge k_p.$$

{\it Regularity of the very weak solution of (\ref{eq 1.2}). }  Let $u$ be a very weak solution of (\ref{eq 1.2}), take $\bar x=(\bar x_1,\cdots, \bar x_N) \in \R^N_+\setminus\{e_N\}$
and $r=\frac14\min\{|\bar x-e_N|, \bar x_N\}$, then
\begin{eqnarray*}
u  &=& \mathbb{G}_{\alpha,\R^N_+}[u^p]+k\mathbb{G}_{\alpha,\R^N_+}[\delta_{e_N}]  \\
   &=&  \mathbb{G}_{\alpha,\R^N_+}[u^p\chi_{B_r(\bar x )}]+\mathbb{G}_{\alpha,\R^N_+}[u^p\chi_{\R^N_+\setminus B_r(\bar x )}] +k\mathbb{G}_{\alpha,\R^N_+}[\delta_{e_N}],
\end{eqnarray*}
where $\mathbb{G}_{\alpha,\R^N_+}[\delta_0]$ is $C^\infty_{loc}(\R^N_+\setminus\{e_N\})$.
To be convenient,  we write $B_i=B_{2^{-i}r}(\bar x )$.
For $x\in B_{i}$, we have that
$$\mathbb{G}_{\alpha,\R^N_+}[\chi_{\R^N_+\setminus B_{i-1}} u^p](x)=\int_{\R^N_+\setminus B_{i-1}}   u(y)^pG_{\alpha,\R^N_+}(x,y)dy,$$
then, for some $C_i>0$, we have that
\begin{equation}\label{3.01-1}
\norm{\mathbb{G}_{\alpha,\R^N_+}[\chi_{\R^N_+\setminus B_{i}}  u^p]}_{C^2(B_{i-1})}\le C_i \norm{ u^p}_{L^1(B_{2r}(\bar x ))}
\end{equation}
and for some   constant $\tilde{c}_i>0$ depending on $i$, we obtain that
 \begin{equation}\label{3.01}
\norm{\mathbb{G}_{\alpha,\R^N_+}[\delta_0]}_{C^2(B_{i-1})} \le   \tilde{c}_i|\bar x |^{2\alpha-N}.
\end{equation}
By  Proposition 2.2 in \cite{CQ}, we have that $u^p\in L^{q_0}(B_{2r_0}(\bar x ))$ with $q_0=\frac12(1+\frac1p\frac{N}{N-2\alpha})>1$
and then
$$\mathbb{G}_{\alpha,\R^N_+}[\chi_{B_{2r}(\bar x )} u^p]\in L^{p_1}(B_{2r}(\bar x ))\quad {\rm with }\ \ p_1=\frac{Nq_0}{N-2\alpha q_0}.$$
Similarly,
$$ u^p\in L^{q_1}(B_{r}(\bar x ))\quad {\rm with }\ \   q_1=\frac{p_1}{p}$$
and
$$\mathbb{G}_{\alpha,\R^N_+}[\chi_{B_{r}(\bar x )}u^p]\in L^{p_2}(B_{r}(\bar x ))\quad {\rm with }\ \ p_2=\frac{Nq_1}{N-2\alpha q_1}.$$
Let $q_i=\frac{p_{i}}{p}$ and $p_{i+1}=\frac{Nq_i}{N-2q_i}$ if $N-2q_i>0$. Then we obtain inductively that
$$ u^p\in L^{q_i}(B_{i}) \quad\mbox{and}
\quad \mathbb{G}_{\alpha,\R^N_+}[\chi_{B_{i}} u^p]\in L^{p_{i+1}}(B_{i}).$$
We may verify that
$$\frac{q_{i+1}}{q_i}=\frac1p\frac N{N-2\alpha q_i}>\frac1p\frac{N}{N-2\alpha q_1}>1.$$
Therefore,
$\lim_{i\to+\infty} q_i=+\infty,$
so there exists $i_0$ such that $N-2q_{i_0}>0$, but $N-2q_{i_0+1}<0$, then we deduce that
$$\mathbb{G}_{\alpha,\R^N_+}[\chi_{B_{i_0}} u^p]\in L^{\infty}(B_{{i_0}}).$$
As a result, we obtain that
$$ u\in L^{\infty}(B_{i_0}).$$
By  regularity results in \cite{RS1}, we know from (\ref{3.01}) that $u$ is H\"{o}lder continuous in $B_{{i_0}}$ and  so is $ u^p$. Then $u$ is a classical solution of  (\ref{eq 1.1}).
  \hfill$\Box$

\smallskip

\bigskip

\noindent{\bf Acknowledgements:}  H. Chen  is supported by NNSF of China, No:11401270, 11661045 and by  Jiangxi Provincial Natural Science Foundation,
 No: 20161ACB20007.   The author would like to thank for the support by CIM in Nankai University and  useful discussion with  Professors Yiming Long and
  Feng Zhou.

\medskip


\begin{thebibliography}{99}
\bibitem {ABG} S. Alarc\'on, M. Burgos-P\'erez,   \'A. Garc¨ªa-Meli\'an  and A. Quaas,
Nonexistence results for elliptic equations with gradient terms,  {\it  J.   Diff. Eq. 260(1)}, 758-780 (2016).

\bibitem {AS}  S.N. Armstrong, B. Sirakov, C.K. Smart, Fundamental
solutions of homogeneous fully nonlinear elliptic equations,
{\it Comm.  Pure  Appl. Math. 64(6)}, 737-777 (2011).

\bibitem {AS1} S.N. Armstrong, B. Sirakov, Sharp Liouville results for fully
nonlinear equations with power-growth nonlinearities,
{\it Ann. Sc. Norm. Super. Pisa Cl. Sci. 10(5)}, 711-728 (2011).

\bibitem {BCN} H. Berestycki, I. Capuzzo-Dolcetta, L. Nirenberg, Superlinear
indefinite elliptic problems and nonlinear Liouville theorems,
{\it Topol. Methods Nonlinear Anal. 4(1)}, 59-78 (1994).

\bibitem {BHR1} H. Berestycki, F. Hamel and L. Rossi, Liouville type results for
semilinear elliptic equations in unbounded domains,
{\it Ann.  Mat. 186(3)}, 467-507 (2007).

\bibitem {BL} H. Berestycki  and P. Lions, Existence of solutions for nonlinear scalar field equations. Part I:
The ground state, {\it  Arch. Rational Meth. Anal. 82},  313-345 (1983).

\bibitem{BV} M. Bidaut-V\'eron  and L. V\'eron, Nonlinear elliptic equations on compact Riemannian
manifolds and asymptotics of Emden equations, {\it Invent. Math. 106},  489-539 (1991).

\bibitem{BG} J. Bouchard and A. Georges,
Anomalous diffusion in disordered media: statistical mechanics, models and physical applications,
{\it Phys. Rep. 195}, (1990).

\bibitem{CGS} L. Caffarelli, B. Gidas and J. Spruck, Asymptotic symmetry and local behaviour of
semilinear elliptic equations with critical Sobolev growth, {\it Comm. Pure Appl. Math.
42}, 271-297 (1989).

\bibitem{CV}
L. Caffarelli and L. Vasseur,
 Drift diffusion equations with fractional diffusion and the quasi-geostrophic equation,
 {\it  Ann. of Math. 171}, 1903-1930 (2010).

\bibitem {CC1} I. Capuzzo-Dolcetta,  A. Cutr\`{i}, Hadamard and Liouville type
results for fully nonlinear partial differential inequalities,
{\it Comm. Contemp. Math. 5}, 435-448 (2003).



\bibitem{CF} H.  Chen and P. Felmer,  On Liouville type theorems for fully nonlinear elliptic equations with gradient term,
 {\it J. Diff. Eq. 255(8),} 2167-2195 (2013).

\bibitem {CFQ} H. Chen, P. Felmer and A. Quaas, Large solution to elliptic  equations involving fractional Laplacian,
 {\it  Annales de l'Institut Henri Poincar\'{e} (C) 32,} 1199-1228 (2015).

\bibitem{CQ} H. Chen and A. Quaas, Classification of isolated singularities of nonnegative solutions to fractional semi-linear elliptic equations and the existence results,
 {\it ArXiv:1509.05836} (2015).

\bibitem {CV1} H. Chen and L. V\'{e}ron, Semilinear fractional elliptic equations involving measures, {\it J. Differential equations 257(5)}, 1457-1486 (2014).

\bibitem {CFY} W. Chen, Y. Fang  and Y. Ray,  Liouville theorems involving the fractional Laplacian on a half space,
{\it  Adv.  Math. 274}, 167-198 (2015).

\bibitem{CLL}   W. Chen and Y. Fang, A Liouville type theorem for poly-harmonic Dirichlet problem in a half space,
{\it Adv. Math. 229},  2835-2867 (2012).

\bibitem{C} W. Chen,  X. Cui, Z. Yuan and R. Zhuo,  A liouville theorem for the fractional laplacian,
{\it arXiv:1401.7402 (2014).}

\bibitem {CS} Z. Chen and R. Song, Estimates on Green functions and poisson kernels for symmetric stable process, {\it Math.
Ann. 312}, 465-501 (1998).

\bibitem {CT} Z. Chen  and J. Tokle, Global heat kernel estimates for fractional Laplacians in unbounded open sets,
{\it  Probability theory and related fields 149,}  373-395 (2011).

\bibitem {CL} A. Cutr\`{i}, F. Leoni, On the Liouville Property for fully
nonlinear equations,
{\it Ann. Inst. H. Poincare Anal. Non Lineaire 17(2)}, 219-245 (2000).


\bibitem {E} M. Esteban,  Nonlinear elliptic problems in strip-like domains: symmetry of positive vortex rings,
{\it  Nonlinear Analysis: Theory, Methods $\&$ Applications 7}, 365-379 (1983).


\bibitem {EL} M. Esteban  and P. Lions,  Existence and non-existence results for semilinear elliptic problems in unbounded domains,
 {\it Proceedings of the Royal Society of Edinburgh: Section A Mathematics, 93,} 1-14 (1982).


\bibitem{FM} M. Fall and T. Weth, Monotonicity and nonexistence results for some fractional elliptic problems in the half-space,
{\it Comm.   Cont. Math. 18(1)}, 1-25 (2016).



\bibitem{FW}
M. Fall and T. Weth,
Nonexistence results for a class of fractional elliptic boundary values problems,
{\it J. Funct. Anal. 263},  2205-2227 (2012).

\bibitem{F} A. Farina, On the classification of solutions of the Lane-Emden equation on unbounded domains of $\R^N$,
{\it J. Math. Pures Appl.   87},   537-561 (2007).

\bibitem {FQ2} P. Felmer,  A. Quaas, Fundamental solutions and Liouville type theorems for nonlinear integral operators,
{\it Adv. Math. 226(3)}, 2712-2738 (2011).


\bibitem{GS} B. Gidas and J. Spruck, Global and local behaviour of positive solutions of nonlinear
elliptic equations, {\it Comm. Pure Appl. Math. 34}, 525-598 (1981).




\bibitem{KMPS} N. Korevaar, R. Mazzeo, F. Pacard, and R. Schoen, Refined asymptotics for constant
scalar curvature metrics with isolated singularities,
{\it  Invent. Math. 135},  233-272 (1999).


\bibitem{NPV}  E. Di Nezza, G. Palatucci and E. Valdinoci,  Hitchhiker's guide to the fractional Sobolev spaces,
  {\it Bull. Sci. Math. 136}, 521-573 (2012).


\bibitem{W} X.  Wang, On the Cauchy problem for reaction-diffusion equations, {\it Trans. Amer.Math. Soc. 337},
549-590 (1993).
 \bibitem {QS} A.  Quass, B. Sirakov, Existence and nonexistence
 results for fully nonlinear elliptc systems,
 {\it Indiana Univ. Math. J. 58(2)}, 751-788 (2009).
 \bibitem {QX} A. Quaas and A. Xia, Liouville type theorems for nonlinear elliptic equations and systems involving fractional Laplacian in the half space,
 {\it Calculus of Variations and Partial Differential Equations 52},  641-659 (2015).

\bibitem {RS1} X. Ros-Oton and J. Serra, The Dirichlet problem for the fractional
laplacian: regularity up to the boundary,  {\it J. Math. Pures Appl. 101(3)},  275-302 (2014).


\bibitem {LR} L. Rossi, Non-existence of positive solutions of
fully nonlinear elliptic equations in unbounded domains,
{\it Comm. Pure and Appl. Anal. 7}, 125-141 (2008).

\bibitem {TZ}
V. Tarasov and G. Zaslavsky,
 Fractional dynamics of systems with long-range interaction,
 {\it Commun. Nonlinear Sci. Numer. Simul. 11}, 885-889 (2006).


\bibitem {V}  L. V\'{e}ron, Elliptic equations involving Measures,
 Stationary Partial Differential equations,
{\it Vol. I, 593-712, Handb. Differ. Equ. North-Holland, Amsterdam}
(2004).

\end{thebibliography}
\end{document}